\newtheorem{theorem}{Theorem}[section]
\newtheorem{lemma}[theorem]{Lemma}
\theoremstyle{definition}
\newtheorem{definition}[theorem]{Definition}
\theoremstyle{proposition}
\newtheorem{proposition}[theorem]{Proposition}
\theoremstyle{remark}
\theoremstyle{notation}
\theoremstyle{conjecture}
\newtheorem{conjecture}[theorem]{Conjecture}
\theoremstyle{corollary}
\newtheorem{corollary}[theorem]{Corollary}
\numberwithin{equation}{section}
\journal{Symmetry}
\begin{document}

\begin{frontmatter}




\title{An E-sequence approach to  the 3x + 1 problem}

\author{SanMin Wang}
\address{Department of Mathematics,  Zhejiang Sci-Tech University, Hangzhou,
310018, P.R. China\\
Email: wangsanmin@hotmail.com}

\date{}
\begin{abstract}
For any odd positive integer $x$,  define $(x_n)_{n\geqslant 0} $ and $(a_n )_{n\geqslant 1} $  by
setting $x_{0}=x, \,\, x_n =\cfrac{3x_{n-1} +1}{2^{a_n }}$
such that all $x_n $  are odd.  The 3x+1 problem asserts that
there is  an $x_n =1$  for all $x$. Usually,  $(x_n )_{n\geqslant 0} $ is called the trajectory of $x$.  In this paper, we  concentrate on $(a_n )_{n\geqslant 1} $  and call it the E-sequence of $x$.   The idea is that,
we generalize E-sequences  to all infinite sequence $(a_n )_{n\geqslant 1} $  of positive integers and consider  all these generalized E-sequences.   We then define $(a_n )_{n\geqslant 1} $  to be  $\Omega-$convergent to $x$  if it is the E-sequence of  $x$ and to be $\Omega-$divergent if it is not the E-sequence of any odd positive integer.  We prove a remarkable fact  that the $\Omega-$divergence of all non-periodic E-sequences  implies the periodicity of  $(x_n )_{n\geqslant 0} $  for all $x_0$.   The principal results of this paper are to prove the $\Omega-$divergence of several classes of non-periodic E-sequences.  Especially, we prove that all non-periodic E-sequences $(a_n )_{n\geqslant
1} $ with $\mathop {\overline {\lim } }\limits_{n\to \infty } \cfrac{b_n
}{n}>\log _23$ are $\Omega-$divergent by using the Wendel's inequality and the Matthews and Watts's formula $x_n
=\cfrac{3^n x_0 }{2^{b_n }}\prod\limits_{k=0}^{n-1} {(1+\cfrac{1}{3x_k })} $,
where $b_n =\sum\limits_{k=1}^n {a_k } $.
These results present  a possible way to prove
 the periodicity of  trajectories of all positive integers in the 3x + 1 problem
and  we call it the E-sequence approach.
\end{abstract}

\begin{keyword}

\MSC {11A99\sep 11B83}

3x+1 problem\sep E-sequence approach\sep $\Omega-$Divergence of non-periodic E-sequences \sep the Wendel's inequality

\end{keyword}

\end{frontmatter}
\setlength\linenumbersep{1cm}
\renewcommand\linenumberfont{\normalfont}

\section{Introduction}
For any odd positive integer $x$, define two infinite sequences $(x_n
)_{n\geqslant 0} $ and $(a_n )_{n\geqslant 1} $ of positive integers by
setting
\begin{equation}x_0 =x, \quad x_n =\cfrac{3x_{n-1} +1}{2^{a_n }}
\end{equation}
such that $x_n $ is
odd for all $n\in \mathbb{N}=\{1,2,\ldots \}$. The 3x+1 problem asserts that
there is $n\in \mathbb{N}$ such that $x_n =1$  for all odd positive integer
$x$. For a survey,  see~\cite{La85}.  For recent developments,  see [9-14].

Usually,  $(x_n )_{n\geqslant 0} $ is called the trajectory of $x$.  In this paper, we  concentrate on $(a_n )_{n\geqslant 1} $  and call it the E-sequence of $x$.   The idea is that, we generalize E-sequences  to all infinite sequence $(a_n )_{n\geqslant 1} $  of positive integers.  Given any generalized E-sequence $(a_n )_{n\geqslant 1} $,  if it is the E-sequence of the odd positive integer $x$, it is called to be $\Omega-$convergent to $x$ and, denoted by $\Omega -\lim a_n =x$; if
$(a_n )_{n\geqslant 1} $  is not the E-sequence of any odd positive integer, it is called to be $\Omega-$divergent and denoted by $\Omega -\lim a_n =\infty $. Subsequently, these generalized E-sequences are also called  E-sequences for simplicity.

The 3x+1 problem in the form  (1.1)  should be owed to  Crandall, Sander et al.,  see~\cite{C77, S90}.  E-sequences are some variants of Everett's parity sequences~\cite{Ev77} and Terras's encoding representations~\cite{Te76}. Everett and Terras  focused on
finite E-sequences resulted from (1.1). What we
concern is the $\Omega-$convergence and $\Omega-$divergence of any infinite sequence of
positive integers, i.e., the generalized E-sequences.

A possible way to prove the 3x+1 problem were devised by M\"{o}ller as
follows, see~\cite{Mo78}.
\begin{conjecture}\label{C:1}
\begin{enumerate}[label=\upshape(\roman*), leftmargin=*, widest=iii]
\item  $(x_n )_{n\geqslant 0} $  is periodic for all odd positive integer $x_0$;\label{it:1}
\item  $(1,1,\cdots )$ is the unique pure periodic trajectory.\label{it:2}
\end{enumerate}
\end{conjecture}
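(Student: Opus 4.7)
The approach, using the framework of E-sequences set up above, is to split the conjecture into a combinatorial part (i) about arbitrary infinite positive-integer sequences and a cycle-elimination part (ii) about fixed trajectories.

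For part (i), I would first establish the reduction: \emph{if every non-periodic infinite sequence $(a_n)_{n\geq 1}$ of positive integers is $\Omega$-divergent, then $(x_n)_{n\geq 0}$ is periodic for every odd $x_0$}. This is almost tautological in the new language: the E-sequence produced by $x_0$ is, by construction, not $\Omega$-divergent, so it must be periodic; and periodicity of $(a_n)$ propagates via $x_n=(3x_{n-1}+1)/2^{a_n}$ to force periodicity of $(x_n)$. The task then becomes to prove $\Omega$-divergence of \emph{every} non-periodic E-sequence. I would sort such sequences by the growth rate of $b_n=a_1+\cdots+a_n$ and first attack the regime $\limsup_{n\to\infty} b_n/n>\log_2 3$. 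Here I would use the Matthews--Watts identity
\[
x_n=\frac{3^n x_0}{2^{b_n}}\prod_{k=0}^{n-1}\left(1+\frac{1}{3x_k}\right)
\]
together with Wendel's inequality to control the product: along a subsequence realizing the $\limsup$, the prefactor $3^n/2^{b_n}$ decays geometrically, the product is dominated by a slowly-growing Wendel bound, and the supposed value $x_n$ is forced below $1$, contradicting $x_n\in\mathbb{N}$ and thereby killing every candidate $x_0$.

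For the complementary regime $\limsup b_n/n\leq\log_2 3$, I would refine by subdividing according to $\liminf b_n/n$ and by injecting the 2-adic exactness constraint $2^{a_n}\parallel 3x_{n-1}+1$, aiming for a sharper asymptotic comparison of $b_n$ with $n\log_2 3$ in each subregime; periodic E-sequences sit precisely on the critical line, and every other sequence should be excluded. For part (ii), I would work with a hypothetical pure periodic trajectory of minimal period $N$. The Matthews--Watts identity collapses to
\[
2^{b_N}=3^N\prod_{k=0}^{N-1}\left(1+\frac{1}{3x_k}\right),
\]
which pins $b_N$ very close to $N\log_2 3$ and, via the product, forces a strong lower bound on $\min_k x_k$; combining this with the divisibility restrictions imposed by the individual $a_k$ should rule out every cycle apart from the trivial one $x_0=1$, $N=1$, $a_1=2$.

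The main obstacle is the slow-growth regime $\limsup b_n/n\leq\log_2 3$ in part (i). Wendel's inequality is essentially tight there, so one cannot produce $x_n<1$ by a purely analytic size estimate; any argument must exploit the genuinely arithmetic coupling between $(a_n)$ and $(x_n)$. This is precisely where the problem inherits the full depth of the $3x+1$ conjecture, since a hypothetical unbounded trajectory or a nontrivial cycle would live inside exactly this regime, and any proof of $\Omega$-divergence there would automatically dispose of both pathologies.
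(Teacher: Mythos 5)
You are attempting to prove a statement that the paper itself labels Conjecture~1.1 and does not prove: it is M\"oller's reformulation and is equivalent in strength to the $3x+1$ problem. What the paper actually establishes is (a) the reduction you describe in your first step --- Theorem~3.6, resting on Theorem~3.5 (an $\Omega$-convergent periodic E-sequence forces a periodic trajectory) and Theorem~3.4 (the case $3^r>2^s$ is $\Omega$-divergent) --- and (b) partial $\Omega$-divergence results: Theorem~4.2 for the regime $\limsup_{n\to\infty} b_n/n>\log_2 3$, plus special families (Theorems~4.11, 4.13, 4.14 and Corollaries~4.7, 4.12). Your proposal reproduces exactly this program and then, to your credit, honestly identifies the subcritical regime $\limsup b_n/n\leqslant \log_2 3$ as the obstacle. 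But that regime is not a loose end to be ``refined by subdividing according to $\liminf b_n/n$''; it \emph{is} the conjecture. Every hypothetical divergent trajectory and every hypothetical nontrivial cycle lives there, and neither you nor the paper supplies any mechanism for it. So what you have written is a research plan whose unproved portion coincides with the full strength of the statement; as a proof it has a gap the size of the theorem.

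Two concrete technical corrections to the parts you do sketch. First, in the supercritical regime your claim that Wendel's inequality makes the product ``slowly growing'' so that $x_n$ is ``forced below $1$'' fails without distinctness of the trajectory values: if $(x_n)$ is bounded, factors $1+\frac{1}{3x_k}$ bounded away from $1$ recur and the product grows geometrically --- e.g.\ the constant trajectory $x_k\equiv 1$ gives $\prod_{k=0}^{n-1}\bigl(1+\frac{1}{3x_k}\bigr)=\bigl(\frac{4}{3}\bigr)^n$, exactly cancelling $3^n/2^{b_n}=3^n/4^n$ --- so no purely analytic size estimate yields $x_n<1$. The paper's Theorem~4.2 instead argues by contradiction through a dichotomy: Lemma~4.1 uses the fact that an \emph{unbounded} trajectory has pairwise distinct values to bound the product by $\prod_{k=1}^{n}\bigl(1+\frac{1}{3k}\bigr)$, concluding $\limsup b_n/n\leqslant\log_2 3$; hence under $\limsup b_n/n>\log_2 3$ an $\Omega$-limit would have a bounded, therefore (by pigeonhole) periodic trajectory, therefore a periodic E-sequence, contradicting non-periodicity. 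You need that case split. Second, for part~(ii) the paper explicitly disclaims that the E-sequence approach helps with uniqueness of the cycle, pointing instead to Simons's work on $m$-cycles; your assertion that the cycle identity $2^{b_N}=3^N\prod_{k=0}^{N-1}\bigl(1+\frac{1}{3x_k}\bigr)$ combined with divisibility constraints ``should rule out every cycle'' is precisely the hard open problem (known partial results require tools such as lower bounds for linear forms in logarithms), and nothing in your sketch produces it.
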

Usually, we can convert one claim about trajectories into the one about
E-sequences. As for E-sequences, we have the following conjecture.

\begin{conjecture}\label{C:2}
Let $b_n \mbox{=}\sum\limits_{i=1}^n {a_i } $.  Then
\begin{enumerate}[label=\upshape(\roman*), leftmargin=*, widest=iii]
\item all non-periodic E-sequences are $\Omega-$divergent;\label{it:1}
\item every E-sequence $(a_n )_{n\geqslant 1} $ satisfying $3^n >2^{b_n }$ for all $n\in \mathbb{N}$ is $\Omega-$divergent.\label{it:2}
\end{enumerate}
\end{conjecture}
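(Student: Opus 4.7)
The plan is to base the argument on the Matthews--Watts formula
\[
x_n = \cfrac{3^n x_0}{2^{b_n}} \prod_{k=0}^{n-1}\Bigl(1 + \cfrac{1}{3x_k}\Bigr),
\]
combined with the standard 2-adic realisation of an E-sequence. For any $(a_n)_{n\geqslant 1}$, the recursion $3X_{n-1}+1 = 2^{a_n} X_n$ with $X_0 = X$ determines a unique 2-adic integer $X$, and by construction $(a_n)$ is $\Omega$-convergent to an odd positive integer $x_0$ if and only if $X = x_0$. Both parts of Conjecture~\ref{C:2} therefore reduce to showing that this 2-adic $X$ is not a positive integer under the respective hypotheses.

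For part~(ii), suppose for contradiction that $(a_n)$ with $3^n > 2^{b_n}$ for all $n$ is the E-sequence of some odd positive integer $x_0$. Since $P_n := \prod_{k=0}^{n-1}(1+\frac{1}{3x_k}) > 1$ and $3^n/2^{b_n} > 1$, the Matthews--Watts formula gives $x_n > x_0$, hence $x_n \geqslant x_0 + 2$ for all $n \geqslant 1$. Feeding this lower bound back into $P_n$ yields $P_n \leqslant (1+\frac{1}{3x_0})(1+\frac{1}{3(x_0+2)})^{n-1}$, and combining with $x_n = \frac{3^n x_0}{2^{b_n}}P_n$ produces matching upper and lower estimates on $x_n$ in terms of $b_n$. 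The aim is then to show, by examining the 2-adic expansion of $X$ dictated by these growth constraints, that the only 2-adic integers $X$ consistent with $3^n > 2^{b_n}$ for all $n$ are non-positive --- the paradigmatic case being $(a_n)\equiv 1$, for which $X = -1$ --- contradicting $X = x_0 > 0$.

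For part~(i), the plan is to sharpen the argument indicated in the abstract, which handles the regime $\limsup_{n\to\infty} b_n/n > \log_2 3$. The remaining case is $\limsup_{n\to\infty} b_n/n \leqslant \log_2 3$; here I would combine Wendel's inequality with an averaging argument on subwindows of $(a_n)$ to show that the local statistics of any admissible tail must approximate those of a genuine 3x+1 trajectory to arbitrary precision, a condition which in the non-periodic case is incompatible with $X$ being a positive integer.

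The main obstacle is that part~(i) is, via the equivalence stated in the abstract between $\Omega$-divergence of all non-periodic E-sequences and periodicity of all trajectories, essentially the periodicity portion of the 3x+1 problem itself; any successful attack on it must incorporate genuinely new input beyond 2-adic density or Wendel-type counting. For part~(ii) the obstruction is more concrete but still delicate: the bare inequality $x_n \geqslant x_0 + 2$ is too weak on its own, so one must exploit the full Matthews--Watts product together with sharp bounds on $P_n$ along every admissible trajectory --- precisely the step the paper's main theorem carries out only under $\limsup_{n\to\infty} b_n/n > \log_2 3$.
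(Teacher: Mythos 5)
There is a genuine gap, and it is unavoidable: the statement you are proving is Conjecture~\ref{C:2}, which the paper does not prove and explicitly leaves open. The paper's actual results are partial cases only --- Theorem 3.4 (periodic sequences with $3^r>2^s$), Theorem 4.2 ($\limsup_{n\to\infty}b_n/n>\log_23$, via Lemma 4.1), Theorem 4.6 and Corollary 4.7, and Theorems 4.11, 4.13, 4.14 for special families --- and Section 5 states that even the boundary case $\limsup_{n\to\infty}b_n/n=\log_23$ is not understood. Your proposal, to its credit, acknowledges this, but it is then a research plan rather than a proof, and its decisive steps are restatements of the conjecture itself. For part (ii): the deduction $x_n>x_0$, hence $x_n\geqslant x_0+2$, is correct, but feeding it back gives only $P_n\leqslant(1+\frac{1}{3x_0})(1+\frac{1}{3(x_0+2)})^{n-1}$, an \emph{exponentially} growing bound that constrains nothing; the polynomial bounds the paper extracts (Lemma 4.1, and Lemmas 4.3--4.5 via Wendel's inequality) require the distinctness of the $x_k$, and even then they only yield a contradiction when $2^{b_n}$ eventually dominates $3^n$ on average --- precisely the regime excluded by the hypothesis $3^n>2^{b_n}$. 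Your remaining step, ``show the only 2-adic integers $X$ consistent with $3^n>2^{b_n}$ are non-positive,'' is exactly the open content (the paper notes part (ii) is equivalent to Terras's stopping-time conjecture), and no mechanism is offered beyond the single example $(a_n)\equiv 1$, $X=-1$.

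For part (i) there is a structural reason your proposed route cannot work as described. The paper remarks (after Conjecture~\ref{C:2}) that part (i) is \emph{false} for the M\"oller and Matthews--Watts generalizations of the 3x+1 map; yet the tools you invoke --- the 2-adic realisation of an E-sequence, the Matthews--Watts product formula, and Wendel-type counting on subwindows --- all carry over verbatim to those generalized maps. Any argument built solely from this machinery would therefore ``prove'' a false statement in the generalized setting, so it must be incomplete for 3x+1 as well: some input specific to the pair $(3,2)$ is mandatory, and your sketch does not identify one. Moreover, by Theorem 3.6 a proof of part (i) would settle Conjecture 1.1(i), the periodicity half of the 3x+1 problem, so the vague ``averaging argument on subwindows'' in the regime $\limsup_{n\to\infty}b_n/n\leqslant\log_23$ would need to be an advance well beyond anything in the paper. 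In short: your reduction to the 2-adic limit $X$ is a sound and standard framing, consistent with Proposition 2.8, but neither part of the conjecture is proved here, and the two places where you defer to future work are exactly the places where the paper, too, stops.
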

Note that Conjecture~\ref{C:2}\ref{it:1} does not hold for some generalizations of the 3x+1 problem studied by M\"{o}ller,  Matthews and Watts in~\cite{MW84, Mo78}; Conjecture ~\ref{C:2}\ref{it:2} implies that there is some
$n$ such that $2^{b_n }>3^n$ in the E-sequence $(a_n )_{n\geqslant 1} $ of every odd
positive integer $x$, which is a conjecture posed by Terras in~\cite{Te76} about his
$\tau -$stopping time.

A remarkable fact is that Conjecture~\ref{C:1}\ref{it:1} is a corollary of Conjecture~\ref{C:2}\ref{it:1}
by Theorem 3.6.  This means that the $\Omega-$divergence of all non-periodic E-sequences  implies the periodicity of  $(x_n )_{n\geqslant 1} $  for all positive integers $x$.
Then Conjecture~\ref{C:2}\ref{it:1} is of significance to the study of the 3x+1 problem. The principal results of this paper are to prove that several classes of non-periodic E-sequences are $\Omega-$divergent. In particular, we prove
that
\begin{enumerate}[label=\upshape(\roman*), leftmargin=*, widest=iii]
\item All non-periodic E-sequences $(a_n )_{n\geqslant
1} $with $\mathop {\overline {\lim } }\limits_{n\to \infty } \cfrac{b_n
}{n}>\log _23$ are $\Omega-$divergent.

\item  If $(a_n )_{n\geqslant 0} $  is  $12121112\cdots $, where $a_n
=2$ if $n\in \{2^1,2^2,2^3,\cdots \}$ and $a_n =1$
otherwise, then $\Omega -\lim a_n =\infty $;

\item Let $\theta \geqslant 1$ be an irrational number, define $a_n =[n\theta ]-[(n-1)\theta
]$, then $\Omega -\lim a_n =\infty $, where $[a]$ denotes the integral part of $a$ for any real
$a$.
\end{enumerate}

 Note that we prove the above claim (i) by using the Wendel's inequality and the Matthews and Watts's formula $x_n
=\cfrac{3^nx_0 }{2^{b_n }}\prod\limits_{k=0}^{n-1} {(1+\frac{1}{3x_k })} $.
 In addition,  it seems that our approach cannot help to prove the conjecture 1.1(ii) of the unique cycle.  For such a topic, see~\cite{S08}.

\section{Preliminaries}
Let $(a_n )_{n\geqslant 1} $ be an E-sequence.  In most cases,  there is no odd
positive integer $x$ such that $(a_n )_{n\geqslant 1} $  is  the E-sequence of $x$, i.e., $\Omega -\lim a_n =\infty $.  However, there always exists $x\in \mathbb{N}$ such that the first $n$ terms of  the E-sequence of $x$  is  $(a_1\ldots a_n)$.  Furthermore, for any  $1\leqslant u\leqslant v\leqslant n$,  there always exists $x\in \mathbb{N}$ such that the first $v-u+1$ terms of  the E-sequence of $x$  is  the designated block $(a_u\ldots a_v)$ of $(a_1\ldots a_n)$, which  is  illustrated as  $(a_1\ldots a_{u-1}) (a_u\ldots a_v)(a_{v+1}\ldots a_{n})$.

\begin{definition}
 Define $b_0 =0$, $b_n
=\sum\limits_{i=1}^n {a_i }, B_n  = \sum\limits_{i = 0}^{n - 1}  \,3^{n - 1 - i} 2^{b_i } $.
\end{definition}

Clearly, $B_1 =1$, $B_n =3B_{n-1} +2^{b_{n-1} }$, $2\nmid B_n $,
$3\nmid B_n $.

\begin{proposition}
Let $(x_n )_{n\geqslant 1} $  and $(a_n )_{n\geqslant 1} $be defined as in (1.1). Then \\
 $x_n =\cfrac{3^nx+B_n }{2^{b_n }}$.
\end{proposition}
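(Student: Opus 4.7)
The plan is to prove the identity $x_n = \dfrac{3^n x + B_n}{2^{b_n}}$ by induction on $n$, since the recursion $x_n = \dfrac{3 x_{n-1} + 1}{2^{a_n}}$ together with the closed form of $B_n$ already given (in particular the recurrence $B_n = 3 B_{n-1} + 2^{b_{n-1}}$ noted immediately after the definition) fits this style of proof perfectly.

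For the base case $n=1$, I would simply observe that $b_1 = a_1$ and $B_1 = 3^{0} \cdot 2^{b_0} = 1$, so $\dfrac{3 x + B_1}{2^{b_1}} = \dfrac{3x+1}{2^{a_1}} = x_1$ by definition (1.1).

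For the inductive step, I would assume $x_n = \dfrac{3^n x + B_n}{2^{b_n}}$ and compute
\begin{equation*}
x_{n+1} \;=\; \frac{3 x_n + 1}{2^{a_{n+1}}} \;=\; \frac{3 \cdot \frac{3^n x + B_n}{2^{b_n}} + 1}{2^{a_{n+1}}} \;=\; \frac{3^{n+1} x + 3 B_n + 2^{b_n}}{2^{b_n + a_{n+1}}}.
\end{equation*}
The denominator equals $2^{b_{n+1}}$ since $b_{n+1} = b_n + a_{n+1}$, while the numerator equals $3^{n+1} x + B_{n+1}$ because of the recurrence $B_{n+1} = 3 B_n + 2^{b_n}$, which is read off directly from the definition $B_{n+1} = \sum_{i=0}^{n} 3^{n-i} 2^{b_i} = 3 \sum_{i=0}^{n-1} 3^{n-1-i} 2^{b_i} + 2^{b_n}$. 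This closes the induction.

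I do not anticipate any real obstacle here; the only bookkeeping point is to check the $B_n$ recurrence carefully (which the author has essentially already stated), after which the inductive step is a one-line algebraic manipulation. The fact that $x_n$ is an integer for all $n$ (guaranteed by the definition of the sequence $(a_n)$) makes the identity meaningful without any divisibility subtleties entering the proof itself.
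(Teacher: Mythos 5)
Your proof is correct, and it is essentially the argument the paper intends: the paper omits the proof, referring to the induction of Theorem~1.1 in Terras's work, and your base case plus the inductive step via the recurrence $B_{n+1}=3B_n+2^{b_n}$ is exactly that standard induction. Nothing is missing; the computation and the verification of the $B_n$ recurrence are both accurate.
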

\begin{proof}
 The proof is by a procedure similar to that of Theorem 1.1
in~\cite{Te76}  and omitted.
\end{proof}

\begin{proposition}
Given any positive integer $n$, there exist two integers $x_n $ and $x_{0}$ such
that $2^{b_n }x_n -3^nx_{0}=B_n $, $1\leqslant x_n <3^n$ and $1\leqslant
x_{0}<2^{b_n }$.
\end{proposition}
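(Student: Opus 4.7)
The plan is to prove Proposition 2.3 by induction on $n$, exploiting the recursion $B_n = 3 B_{n-1} + 2^{b_{n-1}}$ noted right after Definition 2.1. An $n$-step solution of $2^{b_n} x_n - 3^n x_0 = B_n$ should be built from an $(n-1)$-step solution by one application of the map $x \mapsto (3x+1)/2^{a_n}$, modulated by an integer shift that makes the new $x_n$ integral and keeps the new $x_0$ inside its window.

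For the base case $n = 1$, the equation reduces to $2^{a_1} x_1 - 3 x_0 = 1$. Since $\gcd(2^{a_1}, 3) = 1$, the congruence $2^{a_1} x_1 \equiv 1 \pmod 3$ forces $x_1 \in \{1, 2\}$ (determined by the parity of $a_1$), and $x_0 = (2^{a_1} x_1 - 1)/3$ is immediately checked to lie in $[1, 2^{a_1})$.

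For the inductive step, I apply the hypothesis to $(a_1, \ldots, a_{n-1})$ to obtain $(y, z)$ with $2^{b_{n-1}} y - 3^{n-1} z = B_{n-1}$, $y \in [1, 3^{n-1})$, $z \in [1, 2^{b_{n-1}})$, and then set
\[
x_0 = z + 2^{b_{n-1}} k, \qquad x_n = \frac{3y + 1 + 3^n k}{2^{a_n}},
\]
where $k$ is the unique index in $\{0, 1, \ldots, 2^{a_n} - 1\}$ satisfying $3^n k \equiv -(3y + 1) \pmod{2^{a_n}}$ (such $k$ exists and is unique because $\gcd(3, 2) = 1$). The congruence makes $x_n$ a positive integer, and a direct substitution using $B_n = 3 B_{n-1} + 2^{b_{n-1}}$ verifies $2^{b_n} x_n - 3^n x_0 = B_n$.

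Verifying the ranges is then routine. The bound $x_0 \in [1, 2^{b_n})$ follows from $z \geq 1$ and the telescoping estimate $z + 2^{b_{n-1}}(2^{a_n} - 1) \leq 2^{b_n} - 1$. The bound $x_n \geq 1$ is automatic, since $x_n$ is a positive integer by construction. Finally, $x_n < 3^n$ follows from
\[
x_n \leq \frac{3y + 1 + 3^n(2^{a_n} - 1)}{2^{a_n}} = 3^n - \frac{3^n - 3y - 1}{2^{a_n}} < 3^n,
\]
because the induction hypothesis $y \leq 3^{n-1} - 1$ gives $3y + 1 \leq 3^n - 2$. The main point to get right is the ansatz itself: once the shift $k$ is selected so that $2^{a_n}$ clears the numerator of $x_n$, every remaining step is a direct computation, and the strict inequality $x_n < 3^n$ falls out cleanly from $y < 3^{n-1}$.
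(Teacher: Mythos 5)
Your proof is correct, but it takes a genuinely different route from the paper's. The paper obtains the pair $(x_0,x_n)$ non-constructively in one stroke: since $\gcd(2^{b_n},3^n)=1$, B\'ezout gives a solution of $2^{b_n}x_n-3^nx_0=B_n$ normalized to $1\leqslant x_n\leqslant 3^n$; the strict bound $x_n<3^n$ comes from $3\nmid B_n$, and $x_0<2^{b_n}$ from $B_n\geqslant 1$. The only delicate point there is $x_0\geqslant 1$, which the paper settles by descending through the backward recursion $x_{i-1}=\cfrac{2^{a_i}x_i-1}{3}$, checking integrality of every intermediate term (the content later isolated as Proposition 2.4), and propagating signs: $x_0<0$ would force $x_1<0,\ldots,x_n<0$, contradicting $x_n\geqslant 1$. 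You instead induct on $n$ and lift the $(n-1)$-st solution explicitly, choosing the shift $k$ modulo $2^{a_n}$ so that $2^{a_n}$ divides $3y+1+3^nk$ and verifying the identity via $B_n=3B_{n-1}+2^{b_{n-1}}$; I checked the algebra and the four range bounds, and all are sound. Your route buys several things: positivity of $x_0$ is immediate ($x_0=z+2^{b_{n-1}}k\geqslant z\geqslant 1$), so you need neither the sign-propagation argument nor the tacit exclusion of $x_0=0$ that the paper's ``suppose $x_0<0$'' step glosses over; you never invoke $3\nmid B_n$, since strictness of $x_n<3^n$ falls out of $3y+1\leqslant 3^n-2$; and your lift exhibits exactly the congruence $x_0^{(n)}\equiv x_0^{(n-1)}\pmod{2^{b_{n-1}}}$ and the monotonicity $x_0^{(n)}\geqslant x_0^{(n-1)}$ that the paper has to prove separately later as Proposition 2.8(iii). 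What the paper's argument buys in exchange is the explicit chain of intermediate terms $x_1,\ldots,x_{n-1}$ and their integrality (Proposition 2.4), which the paper reuses afterwards and which your lifting construction never produces; if you wanted that byproduct, your induction would need a small supplement.
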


\begin{proof}
By $\gcd (2^{b_n },3^n)=1$, there exist two integers $x_n $
and $x_{0}$ such that $2^{b_n }x_n -3^n x_{0}=B_n $ and $1\leqslant x_n \leqslant
3^n$. Then $x_n <3^n$ by $3\nmid B_n$. By $B_n \geqslant 1$,  we have
$x_{0}=\cfrac{2^{b_n }x_n -B_n }{3^n}<\cfrac{2^{b_n }x_n }{3^n}<2^{b_n }$.  Thus $x_{0}<2^{b_n }$.

By $2^{b_n }x_n -3^nx_{0}=B_n $,  we have $2^{b_n }x_n \equiv B_n
(\bmod 3^n)$. Then\\
$2^{b_{n-1} }(2^{a_n }x_n -1)\equiv 3B_{n-1}
(\bmod 3^n)$ by $B_n =2^{b_{n-1} }+3B_{n-1} $.
Thus $3\vert 2^{a_n }x_n -1$. Define $x_{n-1} =\cfrac{2^{a_n }x_n -1}{3}$.
Then $x_{n-1} \in \mathbb{Z}$, $x_n =\cfrac{3x_{n-1} +1}{2^{a_n }}$ and\\ $2^{b_{n-1 }}x_{n-1}\equiv B_{n-1}
(\bmod 3^{n-1})$.  Sequentially define $x_{n-2},  {\ldots}, x_1$ such that \\ $x_{n-1} =\cfrac{3x_{n-2} +1}{2^{a_{n-1} }}$, {\ldots}, $x_1 =\cfrac{3x_{0}+1}{2^{a_1 }}$. Then $x_i \in \mathbb{Z}$ for all
$0\leqslant i\leqslant n$.

Suppose that $x_{0}<0$.  We then sequentially have $x_1 <0,...,x_n <0$, which contradicts with $x_n \geqslant 1$. Thus $x_{0}\geqslant 1$.
\end{proof}

Note that the validity of Proposition 2.3 is dependent on the structure of
$B_n $. We formulate the middle part of the above proof as the following proposition.

\begin{proposition}
Assume that  $x_n, x_{0}\in \mathbb{Z}$ and $2^{b_n }x_n -3^nx_{0}=B_n $.  Define $x_1 =\cfrac{3x_{0}+1}{2^{a_1 }}$,{\ldots}, $x_{n-1} =\cfrac{3x_{n-2} +1}{2^{a_{n-1} }}$.   Then $x_{n} =\cfrac{3x_{n-1} +1}{2^{a_{n} }}$ and $x_i \in \mathbb{Z}$  for all
$0\leqslant i\leqslant n$.
\end{proposition}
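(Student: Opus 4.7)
The plan is to mirror the backward construction used in the proof of Proposition 2.3, applied to the specific pair $(x_{0},x_{n})$ given in the hypothesis, and then to identify the resulting sequence with the forward one defined in the statement.

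First, I would introduce an auxiliary sequence running backwards. Set $y_{n}=x_{n}$ and, for $k$ from $n$ down to $1$, define $y_{k-1}=\cfrac{2^{a_{k}}y_{k}-1}{3}$. By downward induction on $k$ I would prove simultaneously that $y_{k}\in\mathbb{Z}$ and $2^{b_{k}}y_{k}=3^{k}x_{0}+B_{k}$. The base case $k=n$ is exactly the hypothesis. For the inductive step, using the recurrence $B_{k}=2^{b_{k-1}}+3B_{k-1}$, the identity at level $k$ rewrites as
\[
2^{b_{k-1}}\bigl(2^{a_{k}}y_{k}-1\bigr)=3\bigl(3^{k-1}x_{0}+B_{k-1}\bigr).
\]
Since $\gcd(2,3)=1$, this forces $3\mid 2^{a_{k}}y_{k}-1$, whence $y_{k-1}\in\mathbb{Z}$, and dividing both sides by $3$ yields the corresponding identity $2^{b_{k-1}}y_{k-1}=3^{k-1}x_{0}+B_{k-1}$ at level $k-1$, closing the induction.

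Second, I would read off the case $k=0$, which gives $y_{0}=x_{0}$. Rearranging the backward recurrence shows $y_{k}=\cfrac{3y_{k-1}+1}{2^{a_{k}}}$ and in particular $2^{a_{k}}\mid 3y_{k-1}+1$ for each $k$. The forward sequence $x_{1},\ldots,x_{n-1}$ in the statement is defined by exactly the same recursion starting from the same initial value $x_{0}=y_{0}$, so a trivial forward induction gives $x_{k}=y_{k}$ for $1\leqslant k\leqslant n-1$. Substituting into $y_{n}=\cfrac{3y_{n-1}+1}{2^{a_{n}}}$ produces the required identity $x_{n}=\cfrac{3x_{n-1}+1}{2^{a_{n}}}$, and every intermediate $x_{i}$ inherits integrality from $y_{i}$.

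The only nontrivial point is sustaining the divisibility by $3$ at each backward step, and that is precisely where the global relation $2^{b_{n}}x_{n}-3^{n}x_{0}=B_{n}$ is genuinely used: it is the anchor that, together with the telescoping identity $B_{k}=2^{b_{k-1}}+3B_{k-1}$, propagates the congruence $2^{a_{k}}y_{k}\equiv 1\pmod{3}$ down through every level. With that divisibility secured, the rest of the argument is purely formal bookkeeping, and no positivity assumptions on $x_{0}$ or $x_{n}$ are needed — which is exactly why the proposition is stated for arbitrary integer solutions of the Diophantine relation.
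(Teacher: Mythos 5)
Your proof is correct and takes essentially the same route as the paper: the paper offers no separate proof of this proposition, stating instead that it is ``the middle part'' of the proof of Proposition~2.3, which is precisely your backward construction using $B_k = 2^{b_{k-1}} + 3B_{k-1}$ to force $3 \mid 2^{a_k}x_k - 1$ at each level. Your only deviations---carrying the exact identity $2^{b_k}y_k = 3^k x_0 + B_k$ rather than the paper's congruences modulo $3^k$, and explicitly identifying the backward sequence with the forward-defined one---are minor tightenings of the same argument (and in fact handle the bottom step $y_0 = x_0$ slightly more cleanly than the paper's congruence bookkeeping does).
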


\begin{definition}
For any $1\leqslant u\leqslant v$, define $b_u^{u-1} =0$, $b_u^v
=\sum\limits_{i=u}^v {a_i } $, $B_u^{u-2} =0$, $B_u^{u-1} =1$, $B_u^v
=3^{v-u+1}+3^{v-u}2^{b_u^u }+\cdots +3^12^{b_u^{v-1} }+2^{b_u^v
}=\sum\limits_{i=0}^{v-u+1} {3^{v-u+1-i}2^{b_u^{u-1+i} }} $.
\end{definition}

Then $b_u^u =a_u $, $b_u^{u+1} =a_u +a_{u+1} $, $B_u^u =3+2^{a_u }$,
$B_u^{u+1} =3^2+3\cdot 2^{a_u }+2^{a_u +a_{u+1} }$, $B_u^v
=3B_u^{v-1} +2^{b_u^v }=\sum\limits_{i=u-1}^v
{3^{v-i}2^{b_u^i }} $. Clearly, $b_1^n $ and
$B_1^{n-1} $are same as $b_n $ and $B_n $, respectively.

\begin{proposition}
$
B_n=3^{n-u+1}B_1^{u-2} +3^{n-1-v}2^{b_{u-1} }B_u^v +2^{b_{v+1}
}B_{v+2}^{n-1} .
$
\end{proposition}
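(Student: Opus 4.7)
The plan is to prove Proposition 2.6 by direct expansion and reindexing. Recall that $B_n = B_1^{n-1} = \sum_{i=0}^{n-1} 3^{n-1-i} 2^{b_i}$, where $b_i = b_1^i$. The idea is to show that the three summands on the right-hand side are, after expansion via the defining formula $B_u^v = \sum_{i=0}^{v-u+1} 3^{v-u+1-i} 2^{b_u^{u-1+i}}$, precisely the three blocks of this sum corresponding to the index ranges $[0, u-2]$, $[u-1, v]$, and $[v+1, n-1]$.

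For the first summand, I would substitute $B_1^{u-2} = \sum_{i=0}^{u-2} 3^{u-2-i} 2^{b_i}$ and absorb the outer factor $3^{n-u+1}$, obtaining $\sum_{i=0}^{u-2} 3^{n-1-i} 2^{b_i}$. For the second summand, the key ingredient is the concatenation identity $b_{u-1} + b_u^{u-1+i} = b_{u-1+i}$, which is immediate from the definition of $b_u^v$ since the two ranges $[1, u-1]$ and $[u, u-1+i]$ are disjoint and adjacent. Expanding $B_u^v$ and absorbing $2^{b_{u-1}}$ via this identity, then reindexing by $j = u-1+i$, yields $\sum_{j=u-1}^{v} 3^{n-1-j} 2^{b_j}$. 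The third summand is handled analogously via $b_{v+1} + b_{v+2}^{v+1+i} = b_{v+1+i}$, with reindexing $j = v+1+i$ giving $\sum_{j=v+1}^{n-1} 3^{n-1-j} 2^{b_j}$. Adding the three blocks reassembles the full sum for $B_n$.

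The only subtlety is boundary behavior: when $u = 1$ the first summand involves $B_1^{-1}$, and when $v = n-1$ the third involves $B_{n+1}^{n-1}$. Both are degenerate cases in which the corresponding index range in the partition is empty, and both vanish correctly thanks to the convention $B_u^{u-2} = 0$ adopted in Definition 2.5. Likewise the convention $B_u^{u-1} = 1$ handles the case $v = u-1$ when the middle block is a single-term expression. There is no substantive obstacle: the proposition is essentially a bookkeeping identity, and the endpoint conventions in Definition 2.5 are chosen precisely so that the three index ranges partition $\{0, 1, \ldots, n-1\}$. An alternative route would be induction on $v - u$ using the recurrence $B_u^v = 3 B_u^{v-1} + 2^{b_u^v}$, but the direct expansion seems shortest.
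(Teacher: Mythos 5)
Your proposal is correct and is essentially the paper's own proof read in reverse: the paper splits $B_n=\sum_{i=0}^{n-1}3^{n-1-i}2^{b_i}$ into the three index blocks $[0,u-2]$, $[u-1,v]$, $[v+1,n-1]$ and factors out $3^{n-u+1}$, $3^{n-1-v}2^{b_{u-1}}$, $2^{b_{v+1}}$ using the same concatenation identities $b_i=b_{u-1}+b_u^i$ and $b_i=b_{v+1}+b_{v+2}^i$, while you expand the right-hand side and reassemble. Your explicit check of the degenerate boundary cases via the conventions $B_u^{u-2}=0$ and $B_u^{u-1}=1$ is a small point of extra care the paper leaves implicit, but the argument is the same.
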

\begin{proof}
By $B_1^{u - 2}  = \sum\limits_{i = 0}^{u - 2}  \,3^{u - 2 - i} 2^{b_i }$
and $B_{v + 2}^{n - 1}  = \sum\limits_{i = v + 1}^{n - 1}
3^{n - 1 - i} 2^{b_{v + 2}^i }$,  we have
\begin{align*}
B_n &= B_1^{n - 1}= \sum\limits_{i = 0}^{n - 1}  \,3^{n - 1 - i} 2^{b_i }  = \sum\limits_{i = 0}^{u - 2}  \,3^{n - 1 - i} 2^{b_i }  + \sum\limits_{i = u - 1}^v  \,3^{n - 1 - i} 2^{b_i }  + \sum\limits_{i = v + 1}^{n - 1}  \,3^{n - 1 - i} 2^{b_i }\\
 &= 3^{n - u + 1} \sum\limits_{i = 0}^{u - 2}  \,3^{u - 2 - i} 2^{b_i }  + 3^{n - 1 - v} 2^{b_{u - 1} } \sum\limits_{i = u - 1}^v  \,3^{v - i} 2^{b_u^i }  + 2^{b_{v + 1} } \sum\limits_{i = v + 1}^{n - 1}  \,3^{n - 1 - i} 2^{b_{v + 2}^i }\\
 &= 3^{n - u + 1} B_1^{u - 2}  + 3^{n - 1 - v} 2^{b_{u - 1} } B_u^v  + 2^{b_{v + 1} } B_{v + 2}^{n - 1}.
\end{align*}
\end{proof}
\begin{definition}
For any $1\leqslant u\leqslant v$, define two integers $x_0^{u,v} $ and $x_{v-u+1}^{u,v} $  such that $2^{b_u^v
}x_{v-u+1}^{u,v} -3^{v-u+1}x_0^{u,v} =B_u^{v-1} $,
$1\leqslant x_0^{u,v}
<2^{b_u^v }$ and $1\leqslant x_{v-u+1}^{u,v} <3^{v-u+1}$.  Further define
$x_1^{u,v} =\cfrac{3x_0^{u,v} +1}{2^{a_u }}$, $x_2^{u,v} =\cfrac{3x_1^{u,v}
+1}{2^{a_{u+1} }}$,{\ldots}, $x_{v-u}^{u,v} =\cfrac{3x_{v-u-1}^{u,v}
+1}{2^{a_{v-1} }}$.
\end{definition}

Clearly, $x_0^{1,n} $and $x_n^{1,n} $ are same as $x_{0}$ and $x_n $ in
Proposition 2.3, respectively.

\begin{proposition}

\begin{enumerate}[label=\upshape(\roman*), leftmargin=*, widest=iii]
\item $x_{v-u+1}^{u,v} =\cfrac{3x_{v-u}^{u,v} +1}{2^{a_v }}; $
\item For any  $0\leqslant k\leqslant  v-u$, $x_{k}^{u,v} =\cfrac{3^{k}x_{0}^{u,v} +B_{u}^{u+k-2}}{2^{b_{u}^{u+k-1} }}$  and\\
     $ x_{v-u+1}^{u,v} =\cfrac{3^{v-u+1-k}x_{k}^{u,v} +B_{u+k}^{v-1}}{2^{b_{u+k}^{v} }}$;
\item   $x_0^{u,v} \leqslant x_0^{u,v+1} $;
\item $\Omega -\lim a_n =x$ if and only if $\lim _{n\to \infty } x_0^{1,n} =x$;
\item  $\Omega -\lim a_n =\infty $  if and only if $\lim _{n\to \infty } x_0^{1,n} =\infty $.
\end{enumerate}
\end{proposition}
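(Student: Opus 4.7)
\medskip\noindent\textbf{Proof plan.}
The five parts are proved in order, each one leaning on the preceding parts together with Propositions~2.2--2.4 applied to shifted or truncated subsequences. For (i), I apply Proposition~2.4 to the ``shifted'' E-sequence $(a_u,a_{u+1},\ldots,a_v)$ of length $v-u+1$: the relation $2^{b_u^v}x_{v-u+1}^{u,v}-3^{v-u+1}x_0^{u,v}=B_u^{v-1}$ from Definition~2.7 plays the role of $2^{b_n}x_n-3^nx_0=B_n$ (since $B_u^{v-1}$ is the shifted analog of $B_n$) and the intermediate $x_k^{u,v}$ are defined by the shifted recurrence, so Proposition~2.4 yields $x_{v-u+1}^{u,v}=(3x_{v-u}^{u,v}+1)/2^{a_v}$ with all $x_k^{u,v}\in\mathbb{Z}$. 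The two formulas of (ii) then follow by applying Proposition~2.2 (again in shifted form) once to the initial block $(a_u,\ldots,a_{u+k-1})$ starting from $x_0^{u,v}$, giving the first expression, and once to the tail block $(a_{u+k},\ldots,a_v)$ starting from $x_k^{u,v}$, giving the second.

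For (iii), the idea is to show that $x_0^{u,v+1}$ satisfies the same Diophantine equation as $x_0^{u,v}$. Using the first formula of (ii) for the pair $(u,v+1)$ with $k=v-u+1$, we get $x_{v-u+1}^{u,v+1}=(3^{v-u+1}x_0^{u,v+1}+B_u^{v-1})/2^{b_u^v}\in\mathbb{Z}$, i.e., $2^{b_u^v}x_{v-u+1}^{u,v+1}-3^{v-u+1}x_0^{u,v+1}=B_u^{v-1}$. Subtracting this from the equation defining $x_0^{u,v}$ and using $\gcd(2^{b_u^v},3^{v-u+1})=1$ gives $x_0^{u,v+1}\equiv x_0^{u,v}\pmod{2^{b_u^v}}$. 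Since $x_0^{u,v}\in[1,2^{b_u^v})$ and $x_0^{u,v+1}\geq 1$, the difference $x_0^{u,v+1}-x_0^{u,v}$ lies in $(-2^{b_u^v},\infty)$ and is a multiple of $2^{b_u^v}$, hence is nonnegative.

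For (iv), the forward direction uses Proposition~2.2: if $(a_n)$ is the E-sequence of an odd positive integer $x$, then $2^{b_n}x_n-3^nx=B_n$ with $x_n$ a positive integer, and since $b_n\to\infty$ we have $x<2^{b_n}$ for all large $n$, so $x=x_0^{1,n}$ by uniqueness of the representative in $[1,2^{b_n})$. Conversely, if $x_0^{1,n}\to x$, monotonicity from (iii) makes $x_0^{1,n}=x$ for all $n$ large, and the iterates stabilize compatibly into a single integer sequence $(x_k)_{k\geq 0}$ with $x_0=x$ and $x_{k+1}=(3x_k+1)/2^{a_{k+1}}$. Positivity of the $x_k$ follows by a simple induction, and each $x_k$ must be odd: otherwise $3x_k+1$ would be odd, contradicting $2^{a_{k+1}}\mid 3x_k+1$ with $a_{k+1}\geq 1$. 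The same argument applied to $x_{k+1}$ gives $x_{k+1}$ odd, so $\nu_2(3x_k+1)=a_{k+1}$ exactly, and $(a_n)$ is genuinely the E-sequence of $x$. Claim (v) then follows directly: by (iii), $(x_0^{1,n})$ is nondecreasing, so it either stabilizes---giving a finite $\Omega-$limit via (iv)---or tends to $\infty$. The main subtlety is the oddness step in (iv): Proposition~2.3 alone only yields integrality and positivity, and it is the extra hypothesis $a_k\geq 1$ that promotes the formal Diophantine construction to a genuine $3x+1$ trajectory.
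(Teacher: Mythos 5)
Your proof is correct and follows essentially the same route as the paper: (i) and (ii) via the shifted forms of Propositions 2.4 and 2.2, (iii) via the congruence $x_0^{u,v}\equiv x_0^{u,v+1}\pmod{2^{b_u^v}}$ (forced by $\gcd(2^{b_u^v},3^{v-u+1})=1$ together with the range conditions $1\leqslant x_0^{u,v}<2^{b_u^v}$), and (iv)--(v) from the resulting monotonicity of $(x_0^{1,n})_{n\geqslant 1}$. The only difference is that you carefully spell out what the paper dismisses as ``trivial''---in particular the stabilization of the intermediate iterates and the oddness/exact $2$-adic valuation check that upgrades the Diophantine solution to a genuine E-sequence in the converse of (iv)---which is a legitimate fill-in of an implicit step, not a deviation or a gap.
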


\begin{proof} (i) is from Proposition 2.4.  (ii) is from (i) and  Proposition 2.2.

(iii) By Definition 2.7, $2^{b_u^v
}x_{v-u+1}^{u,v} -3^{v-u+1}x_0^{u,v} =B_u^{v-1} $, \\
 $2^{b_u^{v+1}
}x_{v-u+2}^{u,v+1} -3^{v-u+2}x_0^{u,v+1} =B_u^v $. Then\\
$3^{v-u+1}x_0^{u,v} +B_u^{v-1} \equiv 0{\kern 1pt}(\bmod {\kern 1pt}{\kern
1pt}{\kern 1pt}{\kern 1pt}2^{b_u^v })$,
$3^{v-u+2}x_0^{u,v+1} +B_u^v
\equiv 0{\kern 1pt}{\kern 1pt}(\bmod {\kern 1pt}{\kern 1pt}{\kern 1pt}{\kern
1pt}2^{b_u^{v+1} })$. Thus \\
$3^{v-u+1}x_0^{u,v+1}
+B_u^{v-1} \equiv 0{\kern 1pt}{\kern 1pt}(\bmod {\kern 1pt}{\kern
1pt}{\kern 1pt}{\kern 1pt}2^{b_u^v })$ by $B_u^v \mbox{=}3B_u^{v-1}
+2^{b_u^v }$. Hence \\
$x_0^{u,v} \equiv x_0^{u,v+1} {\kern
1pt}{\kern 1pt}(\bmod {\kern 1pt}{\kern 1pt}{\kern 1pt}{\kern 1pt}2^{b_u^v
})$. Therefore \\
$x_0^{u,v} \leqslant x_0^{u,v+1} $ by $1\leqslant x_0^{u,v}
<2^{b_u^v }$ and $1\leqslant x_0^{u,v+1} <2^{b_u^{v+1} }$.

By (iii), $(x_0^{1,n} )_{n\geqslant 1} $  is increasing, then (iv) and (v) hold
trivially.
\end{proof}
Proposition 2.8(iv) shows that if $\Omega  -\lim a_n =x$,  then $x_0^{1,n} =x$  for all sufficiently large $n$.  Proposition 2.8(v) shows the reasonableness of $\Omega -\lim a_n =\infty $.

\section{Periodic E-sequences}
\begin{definition}
\begin{enumerate}[label=\upshape(\roman*), leftmargin=*, widest=iii]
\item $(a_n )_{n\geqslant 1} $ is periodic if there exist two integers \\ $l\geqslant 0$, $r\geqslant 1$ such that $a_n =a_{n+r} $ for all $n>l$;
\item $r$  is
called the period of $(a_n )_{n\geqslant 1} $;
\item $(a_1 \cdots a_l )$ and
$(a_{l+1} \cdots a_{l+r} \cdots )$ are called the non-periodic part and
periodic part of $(a_n )_{n\geqslant 1} $, respectively;
\item  $(a_n )_{n\geqslant 1} $ is called purely periodic if $l=0$ and, eventually periodic if $l>0$;
\item The  E-sequence is denoted by $a_1 \cdots a_l \overline {a_{l+1} \cdots
a_{l+r} } $.
\end{enumerate}
\end{definition}

Throughout the remainder of  this section, define  $s=b_{l+1}^{l+r} $,
$B_r =B_{l+1}^{l+r-1}$ and let $k\geqslant 0$ be an integer.
\begin{proposition}
Let $a_1 \cdots a_l \overline {a_{l+1} \cdots a_{l+r} } $ be a periodic
E-sequence. Then \\
$B_{rk+l} =3^{rk}B_l +2^{b_l }B_r
\cfrac{3^{rk}-2^{sk}}{3^r-2^s}$.
\end{proposition}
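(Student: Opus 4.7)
The plan is to prove the formula by induction on $k\geqslant 0$. The base case $k=0$ is immediate: both sides reduce to $B_l$ since $3^{rk}-2^{sk}=0$ when $k=0$.

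For the inductive step, I would first iterate the basic recurrence $B_{n+1}=3B_n+2^{b_n}$ (noted just after Definition 2.1) to obtain
\begin{equation*}
B_{n+r}=3^r B_n+\sum_{i=0}^{r-1}3^{r-1-i}2^{b_{n+i}}.
\end{equation*}
Specializing to $n=rk+l$, the periodicity of $(a_n)_{n\geqslant 1}$ forces
\begin{equation*}
b_{rk+l+i}=b_l+ks+b_{l+1}^{l+i}\quad\text{for }0\leqslant i\leqslant r-1,
\end{equation*}
since the non-periodic prefix contributes $b_l$, each of the $k$ completed copies of the periodic block contributes $s=b_{l+1}^{l+r}$, and the partial prefix of the next period contributes $b_{l+1}^{l+i}$ (using the convention $b_{l+1}^{l}=0$ when $i=0$). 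Factoring the common power $2^{b_l+ks}$ out of the sum and recognizing the remaining sum as $B_r=B_{l+1}^{l+r-1}$ from Definition 2.5 yields
\begin{equation*}
B_{r(k+1)+l}=3^r B_{rk+l}+2^{b_l}2^{ks}B_r.
\end{equation*}

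Substituting the inductive hypothesis for $B_{rk+l}$ and simplifying using the elementary identity
\begin{equation*}
\frac{3^r(3^{rk}-2^{sk})}{3^r-2^s}+2^{sk}=\frac{3^{r(k+1)}-2^{s(k+1)}}{3^r-2^s}
\end{equation*}
closes the induction; the denominator is never zero since the coprimality of $2$ and $3$ guarantees $3^r\neq 2^s$ for all $r,s\geqslant 1$. The only real obstacle is bookkeeping: verifying that the shift into the periodic region is applied only to indices $>l$ and that the empty-sum conventions of Definitions 2.1 and 2.5 match the boundary indices $i=0$ and $i=r-1$ in the expanded recurrence. Once the indexing is straight, everything else is a routine geometric-sum manipulation.
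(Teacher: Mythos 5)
Your proof is correct, but it takes a genuinely different route from the paper's. The paper proves the identity directly rather than by induction: it expands $B_{rk+l}=B_1^{rk+l-1}$ via the block decomposition of Proposition 2.6, iterated across all $k$ periodic blocks, obtaining $B_{rk+l}=3^{rk}B_l+3^{rk-r}2^{b_l}B_r+3^{rk-2r}2^{b_l}2^{s}B_r+\cdots+2^{b_l}2^{(k-1)s}B_r$ after identifying each shifted block $B_{l+1+jr}^{l+(j+1)r-1}$ with $B_r$ through periodicity, and then sums the geometric series $3^{rk-r}2^{0}+3^{rk-2r}2^{s}+\cdots+3^{0}2^{(k-1)s}=\frac{3^{rk}-2^{sk}}{3^{r}-2^{s}}$ in closed form. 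You instead verify the closed form by induction on $k$, using only the first-order recurrence $B_{n+1}=3B_n+2^{b_n}$ to derive the one-period step $B_{r(k+1)+l}=3^{r}B_{rk+l}+2^{b_l+ks}B_r$; your bookkeeping is sound, since $b_{rk+l+i}=b_l+ks+b_{l+1}^{l+i}$ follows from $a_n=a_{n+r}$ for $n>l$, and your inner sum $\sum_{i=0}^{r-1}3^{r-1-i}2^{b_{l+1}^{l+i}}$ matches $B_{l+1}^{l+r-1}=B_r$ exactly under Definition 2.5 (including the convention $b_{l+1}^{l}=0$ at $i=0$). Your version is more elementary and in one respect tighter: the paper's displayed multi-block expansion really amounts to iterating Proposition 2.6 $k$ times, an induction the paper leaves implicit, whereas you make that induction explicit and self-contained, needing neither Proposition 2.6 nor its three-part split. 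What you give up is that the direct expansion \emph{derives} the geometric series and hence the formula, while your induction only \emph{verifies} a closed form one must already know. One small remark: your appeal to coprimality for $3^r\neq 2^s$ is fine, but it is even more immediate that $s=b_{l+1}^{l+r}\geqslant r\geqslant 1$ makes $2^s$ even while $3^r$ is odd.
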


\begin{proof} By Proposition 2.6,
$B_{rk+l} =B_1^{rk+l-1}
=3^{rk}B_1^{l-1} +3^{rk-r}2^{b_l }B_{l+1}^{l+r-1} +$\\
$3^{rk-2r}2^{b_{l+r}
}B_{l+r+1}^{l+2r-1} +\cdots +2^{b_{l+rk-r} }B_{l+1+r(k-1)}^{l+rk-1}. $
By
$b_{l+r} =b_l +s,b_{l+2r} =b_l +2s,\cdots,$ \\
$b_{l +rk-r} =b_l +(k-1)s, B_1^{l-1} =B_{l},
B_{l+r+1}^{l+2r-1} =\cdots =B_{l+1+r(k-1)}^{l+rk-1} =B_r $,  we have
\begin{align*}
B_{rk+l} &=3^{rk}B_{l} +3^{rk-r}2^{b_l }B_r +3^{rk-2r}2^{b_l }2^sB_r +\cdots
+2^{b_l }2^{(k-1)s}B_r \\
 &=3^{rk}B_{l} +2^{b_l }B_r (3^{rk-r}2^0+3^{rk-2r}2^s+\cdots
+3^02^{(k-1)s}) \\
&=3^{rk}B_l +2^{b_l }B_r \cfrac{3^{rk}-2^{sk}}{3^r-2^s} .
 \end{align*}
\end{proof}
\begin{proposition}
Let $a_1 \cdots a_l \overline {a_{l+1} \cdots a_{l+r} } $ be a periodic
E-sequence.  By Proposition 2.3,  define  two integers $x_{0}$ and $x_{rk+l} $ such that
$2^{sk+b_l }x_{rk+l} -3^{rk+l}x_{0}=B_{rk+l}$, $1\leqslant x_{0}<2^{sk+b_l }$ and $1\leqslant
x_{rk+l} <3^{rk+l}$. Then there is a constant $K\in \mathbb{N}$, depending
on $a_1 ,\cdots ,a_{l+r} $ such that when $k>K$ and,

\begin{enumerate}[label=\upshape(\roman*), leftmargin=*, widest=iii]
\item if $2^s>3^r$, there is $u_{rk+l} \in \mathbb{Z}$, $0\leqslant u_{rk+l} <(2^s-3^r)3^l$ such that \\
    $x_{0}=  \cfrac{2^{sk+b_l }u_{rk+l} -B_l (2^s-3^r)+2^{b_l }B_r }{(2^s-3^r)3^l}$, $x_{rk+l} =\cfrac{3^{rk}u_{rk+l} +B_r }{2^s-3^r}$;
\item if $3^r>2^s$ there is $u_{rk+l} \in \mathbb{N}$, $1\leqslant u_{rk+l} \leqslant (3^r-2^s)3^l$ such that\\
     $x_{0}= \cfrac{2^{sk+b_l }u_{rk+l} -B_l (3^r-2^s)-2^{b_l }B_r }{(3^r-2^s)3^l}$, $x_{rk+l} =\cfrac{3^{rk}u_{rk+l} -B_r }{3^r-2^s}$.
\end{enumerate}
\end{proposition}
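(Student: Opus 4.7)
The plan is to read off both formulas from a single algebraic identity obtained by combining the representation of $B_{rk+l}$ in Proposition~3.2 with the defining equation $2^{sk+b_l} x_{rk+l} - 3^{rk+l} x_0 = B_{rk+l}$ from Proposition~2.3, and then to verify the range of the resulting integer $u_{rk+l}$ by choosing $k$ large enough to swamp the bounded correction term involving $B_r$.

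Concretely, I would substitute the Proposition~3.2 expression for $B_{rk+l}$, multiply both sides by $(2^s - 3^r)$ (case (i)) or by $(3^r - 2^s)$ (case (ii)), and then segregate the terms that carry a factor $2^{sk}$ from those carrying a factor $3^{rk}$. In case (i) this rearrangement yields an identity of the shape
$$2^{sk+b_l}\bigl[(2^s-3^r)x_{rk+l} - B_r\bigr] \;=\; 3^{rk}\bigl[(2^s-3^r)3^l x_0 + (2^s-3^r)B_l - 2^{b_l}B_r\bigr],$$
with the signs of the $B_r$-terms flipped in case (ii). Because $\gcd(2^{sk+b_l},3^{rk})=1$, both bracketed expressions are divisible by $3^{rk}$ and $2^{sk+b_l}$ respectively, so their common value can be written as $2^{sk+b_l}\cdot 3^{rk}\cdot u_{rk+l}$ for a unique integer $u_{rk+l}$. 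Solving the left-bracket equation for $x_{rk+l}$ and the right-bracket equation for $x_0$ reproduces the two stated closed forms in each case. No further integrality check is needed at this stage; it is provided automatically by the coprimality argument.

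To pin down the range of $u_{rk+l}$ I would invert the formula for $x_{rk+l}$ and translate the a~priori bounds $1\leqslant x_{rk+l}<3^{rk+l}$ directly into inequalities on $u_{rk+l}$. In case (i) this gives
$$\frac{2^s-3^r-B_r}{3^{rk}} \;\leqslant\; u_{rk+l} \;<\; 3^l(2^s-3^r) - \frac{B_r}{3^{rk}},$$
and the analogous inequalities in case (ii) involve $+B_r$ in place of $-B_r$. Since $B_r$, $2^s$, $3^r$ and $l$ are fixed by the data $a_1,\ldots,a_{l+r}$, choosing $K$ so that $3^{rK}$ exceeds $B_r+|2^s-3^r|$ forces the fractional correction terms on both ends to have absolute value strictly less than $1$, so the integer $u_{rk+l}$ is squeezed into the stated range $\{0,1,\ldots,3^l(2^s-3^r)-1\}$ in case (i) and $\{1,2,\ldots,3^l(3^r-2^s)\}$ in case (ii).

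The main obstacle I expect is nothing deeper than careful bookkeeping: the two cases have nearly symmetric algebra but differ in signs, and the endpoint of the range (strict versus non-strict, $0$-based versus $1$-based) must be matched to the sign pattern of the $B_r$-correction. Once the coprimality trick is applied, however, both the existence of $u_{rk+l}$ and the derivation of the two explicit formulas are forced, and the choice of $K$ is the only non-formal input.
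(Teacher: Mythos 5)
Your proposal is correct and follows essentially the paper's own argument: both substitute the Proposition 3.2 closed form for $B_{rk+l}$, extract $u_{rk+l}=\cfrac{(2^s-3^r)x_{rk+l}-B_r}{3^{rk}}$ (with flipped signs in case (ii)) using the coprimality of $2^{sk+b_l}$ and $3^{rk}$, read off the two closed forms, and choose $K$ so that the $B_r/3^{rk}$-type correction terms drop below $1$. Your only deviations are cosmetic: you work with the exact identity where the paper manipulates congruences modulo $(2^s-3^r)3^{rk+l}$, and in case (ii) you obtain $u_{rk+l}\leqslant(3^r-2^s)3^l$ from $x_{rk+l}<3^{rk+l}$ whereas the paper deduces it from $x_0<2^{sk+b_l}$; both routes are valid.
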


\begin{proof}
\begin{enumerate}[label=\upshape(\roman*), leftmargin=*, widest=iii]
\item  $2^s>3^r$.  By $x_{rk+l} =\cfrac{3^{rk+l}x_{0}+B_{rk+l}
}{2^{sk+b_l }}$, we have \\
$2^{sk+b_l }x_{rk+l} \equiv B_{rk+l}
\,\,(mod\,\,3^{rk+l})$. Then \\
$2^{sk+b_l }x_{rk+l} \equiv 3^{rk}B_l +2^{b_l
}B_r \cfrac{2^{sk}-3^{rk}}{2^s-3^r}\,\,(mod\,\,3^{rk+l})$ by Proposition 3.2. Thus\\
$(2^s-3^r)2^{sk+b_l }x_{rk+l} \equiv (2^s-3^r)3^{rk}B_l
+(2^{sk}-3^{rk})2^{b_l }B_r \,\,(mod\,\,(2^s-3^r)3^{rk+l})$. Hence\\
$
2^{sk+b_l }((2^s-3^r)x_{rk+l} -B_r ) \equiv 3^{rk}((2^s-3^r)B_l -2^{b_l
}B_r )\,\,(mod\,\,(2^s-3^r)3^{rk+l}).
$
Define $u_{rk+l} =\cfrac{(2^s-3^r)x_{rk+l} -B_r }{3^{rk}}$. Then $u_{rk+l} \in
\mathbb{Z}$ and \\
$2^{sk+b_l }u_{rk+l} \equiv (2^s-3^r)B_l -2^{b_l }B_r
\,\,(mod\,\,(2^s-3^r)3^l)$. \\
Hence $x_{rk+l} =\cfrac{3^{rk}u_{rk+l} +B_r
}{2^s-3^r}$ and
\begin{align*}
x_{0}&= \cfrac{2^{sk+b_l }x_{rk+l} -B_{rk+l}}{3^{rk+l}}\\
&= \cfrac{2^{sk+b_l }\cfrac{3^{rk}u_{rk+l} +B_r
}{2^s-3^r}-3^{rk}B_l -2^{b_l }B_r \cfrac{2^{sk}-3^{rk}}{2^s-3^r}}{3^{rk+l}}\\
&=\cfrac{3^{rk}2^{sk+b_l }u_{rk+l} +2^{sk+b_l }B_r -3^{rk}B_l
(2^s-3^r)+3^{rk}2^{b_l }B_r -2^{sk+b_l }B_r }{(2^s-3^r)3^{rk+l}}\\
&=  \cfrac{2^{sk+b_l }u_{rk+l} -B_l (2^s-3^r)+2^{b_l }B_r }{(2^s-3^r)3^l}.
\end{align*}

By $x_{rk+l}
=\cfrac{3^{rk}u_{rk+l} +B_r }{2^s-3^r}<3^{rk+l}$, we have \\
$u_{rk + l}  < \cfrac{{3^{rk + l} (2^s  - 3^r ){\rm{ - }}B_r }}{{3^{rk} }}{\rm{ = }}3^l (2^s  - 3^r ){\rm{ - }}\cfrac{{B_r }}{{3^{rk} }} < 3^l (2^s  - 3^r )$.\\
By $x_{rk+l} =\cfrac{3^{rk}u_{rk+l} +B_r
}{2^s-3^r}>0$, we  have $u_{rk+l} >-\cfrac{B_r }{3^{rk}}$.
Since \\
$\lim _{k\to
\infty } -\cfrac{B_r }{3^{rk}}=0$ and $u_{rk+l} \in \mathbb{Z}$, there is a
constant $K\in \mathbb{N}$, depending on $a_1 ,\cdots ,a_{l+r} $ such that
$u_{rk+l} \geqslant 0$ when $k>K$.

\item  $3^r>2^s$. By $x_{rk+l} =\cfrac{3^{rk+l}x_{0}+B_{rk+l}
}{2^{sk+b_l }}$, we have $$2^{sk+b_l }((3^r-2^s)x_{rk+l} +B_r )  \equiv
3^{rk}((3^r-2^s)B_l +2^{b_l }B_r )\,\,(mod(3^r-2^s)3^{rk+l}).$$  Define $u_{rk+l}
=\cfrac{(3^r-2^s)x_{rk+l} +B_r }{3^{rk}}$. Then
$$u_{rk+l} \in \mathbb{Z},2^{sk+b_l }u_{rk+l} \equiv (3^r-2^s)B_l +2^{b_l }B_r
\,\,(mod(3^r-2^s)3^l).$$ Thus $x_{rk+l} =\cfrac{3^{rk}u_{rk+l} -B_r
}{3^r-2^s}$, $x_{0}=\cfrac{2^{sk+b_l }u_{rk+l} -B_l (3^r-2^s)-2^{b_l
}B_r }{(3^r-2^s)3^l}$. \\
Since $x_{rk+l} =\cfrac{3^{rk}u_{rk+l} -B_r
}{3^r-2^s}>0$, then $u_{rk+l} >\cfrac{B_r }{3^{rk}}$ and thus $1\leqslant u_{rk+l}
$.

By $x_{0}= \cfrac{2^{sk+b_l }u_{rk+l} -B_l (3^r-2^s)-2^{b_l }B_r
}{(3^r-2^s)3^l}<2^{sk+b_l }$, we have \\
$u_{rk+l} <(3^r-2^s)3^l+\cfrac{B_l
(3^r-2^s)+2^{b_l }B_r }{2^{sk+b_l }}$. Since\\
 $\lim _{k\to \infty } \cfrac{B_l
(3^r-2^s)+2^{b_l }B_r }{2^{sk+b_l }}=0$ and $u_{rk+l} \in \mathbb{Z}$, there
is a $K\in \mathbb{N}$ such that $u_{rk+l} \leqslant (3^r-2^s)3^l$ when $k>K$.
\end{enumerate}
\end{proof}

\begin{theorem}
If $3^r>2^s$  then $a_1 \cdots a_l \overline {a_{l+1} a_{l+2}
\cdots a_{l+r-1} a_{l+r} } $ is $\Omega-$divergent.
\end{theorem}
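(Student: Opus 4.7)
The plan is to leverage Proposition 3.3(ii) together with the monotonicity from Proposition 2.8(iii), so the proof should be short. For any $k > K$, Proposition 3.3(ii) applied to the periodic E-sequence gives an exact formula
\[
x_0^{1,\,rk+l} \;=\; \frac{2^{sk+b_l}\,u_{rk+l} - B_l(3^r - 2^s) - 2^{b_l} B_r}{(3^r-2^s)\,3^l},
\]
with the key structural fact that $1 \leqslant u_{rk+l} \leqslant (3^r-2^s)\,3^l$; note in particular that the lower bound $u_{rk+l} \geqslant 1$ holds uniformly in $k$.

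The first step is to use this lower bound to get
\[
x_0^{1,\,rk+l} \;\geqslant\; \frac{2^{sk+b_l} - B_l(3^r - 2^s) - 2^{b_l} B_r}{(3^r-2^s)\,3^l}.
\]
Since $s = b_{l+1}^{l+r} = \sum_{i=l+1}^{l+r} a_i \geqslant r \geqslant 1$, we have $2^{sk+b_l} \to \infty$ as $k \to \infty$, while $B_l$, $B_r$, $b_l$, $s$, $r$, and $l$ are all constants determined by the first $l+r$ terms of the E-sequence. Therefore the right-hand side tends to $\infty$, which shows $\lim_{k \to \infty} x_0^{1,\,rk+l} = \infty$ along the arithmetic subsequence $n = rk+l$.

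The second step is to upgrade from this subsequence to the full sequence. Proposition 2.8(iii) (with $u = 1$) asserts that $x_0^{1,n} \leqslant x_0^{1,n+1}$, so $(x_0^{1,n})_{n \geqslant 1}$ is nondecreasing. Since a nondecreasing sequence with an unbounded subsequence is itself unbounded and tends to $\infty$, we conclude $\lim_{n \to \infty} x_0^{1,n} = \infty$. Applying Proposition 2.8(v) then gives $\Omega\text{-}\lim a_n = \infty$, i.e., the E-sequence is $\Omega$-divergent.

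There is no real obstacle here; the substance of the argument is already packaged in Proposition 3.3(ii), whose formula shows that in the $3^r > 2^s$ regime, the auxiliary integer $u_{rk+l}$ is squeezed into a bounded window whose left endpoint is a positive constant, forcing exponential growth of $x_0^{1,rk+l}$. The only subtlety worth flagging is that the formula only describes the subsequence $n = rk+l$, which is why the monotonicity in Proposition 2.8(iii) is needed to transfer divergence to all of $n$.
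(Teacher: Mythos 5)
Your proof is correct and takes essentially the same route as the paper: Proposition 3.3(ii) with the lower bound $u_{rk+l}\geqslant 1$ forces $x_0^{1,rk+l}\to\infty$ as $k\to\infty$, hence $\Omega$-divergence. The only difference is cosmetic --- you spell out the step the paper leaves implicit, namely invoking the monotonicity of $(x_0^{1,n})_{n\geqslant 1}$ from Proposition 2.8(iii) together with Proposition 2.8(v) to pass from the subsequence $n=rk+l$ to the full sequence.
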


\begin{proof}  By Proposition 3.3(ii),
$x_{0}=\cfrac{2^{sk+b_l }u_{rk+l} -B_l (3^r-2^s)-2^{b_l }B_r }{(3^r-2^s)3^l}$ and \\
$u_{rk+l} \geqslant 1$. Then $x_{0}\to+\infty $ as $k\to \infty. $    Thus  the E-sequence  is $\Omega-$divergent.
\end{proof}

\begin{theorem}
If $a_1 \cdots a_l \overline {a_{l+1} a_{l+2} \cdots a_{l+r-1} a_{l+r} } $
is $\Omega-$convergent to $x$ then $(x_n )_{n\geqslant 0} $ is periodic.
\end{theorem}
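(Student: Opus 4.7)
The plan is to combine Theorem 3.4 with the explicit formulas of Proposition 3.3(i) and the stabilization information from Proposition 2.8(iv).

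First I would observe that under the hypothesis of $\Omega$-convergence, the case $3^r>2^s$ is ruled out by Theorem 3.4, and the equality $3^r=2^s$ is impossible since $\gcd(2,3)=1$ and $r,s\geqslant 1$. Therefore we are forced into the regime $2^s>3^r$ of Proposition 3.3(i), which gives, for all $k>K$,
\[
x_0^{1,rk+l}=\frac{2^{sk+b_l}u_{rk+l}-B_l(2^s-3^r)+2^{b_l}B_r}{(2^s-3^r)3^l},\qquad x_{rk+l}=\frac{3^{rk}u_{rk+l}+B_r}{2^s-3^r},
\]
with $u_{rk+l}\in\mathbb{Z}$ and $0\leqslant u_{rk+l}<(2^s-3^r)3^l$.

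Next, since $\Omega$-$\lim a_n=x$, Proposition 2.8(iv) together with the monotonicity in Proposition 2.8(iii) implies $x_0^{1,rk+l}=x$ for every sufficiently large $k$. Substituting into the displayed formula yields
\[
2^{sk+b_l}u_{rk+l}=(2^s-3^r)3^l\,x+B_l(2^s-3^r)-2^{b_l}B_r,
\]
whose right-hand side is a fixed constant $C$ independent of $k$. Since $u_{rk+l}$ is a non-negative integer and $|C|/2^{sk+b_l}\to 0$, we must have $u_{rk+l}=0$ (and in fact $C=0$) for all sufficiently large $k$. The expression for $x_{rk+l}$ then collapses to the single value $B_r/(2^s-3^r)$, which is independent of $k$.

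Consequently, for some large $K$ we have $x_{rk+l}=x_{r(k+1)+l}$, and since the block $a_{l+1}\cdots a_{l+r}$ governing the passage from $x_{rk+l}$ to $x_{r(k+1)+l}$ is the same periodic block at each step, the entire trajectory satisfies $x_{n+r}=x_n$ for all $n\geqslant rK+l$. This is exactly (eventual) periodicity of $(x_n)_{n\geqslant 0}$ in the sense of Definition 3.1(i). The only delicate point is the step that forces $u_{rk+l}=0$; this is where the simultaneous integrality of $u_{rk+l}$ and the exponential growth of $2^{sk+b_l}$ combine decisively, and it is the main conceptual obstacle, though the argument itself is short.
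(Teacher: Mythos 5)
Your proposal is correct and follows essentially the same route as the paper: invoke Theorem 3.4 to force $2^s>3^r$, apply Proposition 3.3(i), use Proposition 2.8(iv) to get $x_0^{1,rk+l}=x$ for all large $k$, and conclude $u_{rk+l}=0$ from the integrality of $u_{rk+l}$ against the growth of $2^{sk+b_l}$, so that $x_{rk+l}=B_r/(2^s-3^r)$ is constant and the trajectory is eventually periodic. Your additional touches (ruling out $3^r=2^s$ by parity, and spelling out how $x_{rk+l}=x_{r(k+1)+l}$ propagates to $x_{n+r}=x_n$ via the periodic exponent block) merely make explicit what the paper leaves implicit.
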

\begin{proof}   By  Theorem 3.4,  $2^s>3^r$.  By Proposition 3.3(i),
$$x_{0}=
\cfrac{2^{sk+b_l }u_{rk+l} -B_l (2^s-3^r)+2^{b_l }B_r }{(2^s-3^r)3^l}$$
and $u_{rk+l} \geqslant 0$  for all $k>K$.  Since $x_{0}=x<\infty$  for all sufficiently large $k$ by Proposition 2.8(iv),
then $u_{rk+l} =0$. Thus $x_{0}=\cfrac{2^{b_l }B_r-B_l (2^s-3^r)}{(2^s-3^r)3^l}$  and $x_{rk+l} =\cfrac{B_r}{2^s-3^r}$  for all $k\geq 0$. Hence $(x_n )_{n\geqslant 0} $ is periodic and its
non-periodic part and periodic part are $(x_0 x_1 \cdots x_l )$ and
$\overline {x_{l+1} \cdots x_{l+r} } $, respectively.
\end{proof}

\begin{theorem}
Assume that all non-periodic E-sequence are $\Omega-$divergent. Then
the trajectory of every odd positive integer  is periodic.
\end{theorem}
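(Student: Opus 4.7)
The plan is to deduce this directly from Theorem 3.5 together with the hypothesis, via a short contrapositive argument.

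First I would fix an arbitrary odd positive integer $x$ and let $(a_n)_{n\geqslant 1}$ be its E-sequence as defined by (1.1). By the very definition of $\Omega$-convergence, $(a_n)_{n\geqslant 1}$ is $\Omega$-convergent to $x$; in particular it is not $\Omega$-divergent. Applying the contrapositive of the hypothesis (every non-periodic E-sequence is $\Omega$-divergent), I conclude that $(a_n)_{n\geqslant 1}$ must be periodic, so it can be written in the form $a_1\cdots a_l \overline{a_{l+1}\cdots a_{l+r}}$ for some $l\geqslant 0$ and $r\geqslant 1$.

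Next, since this periodic E-sequence is $\Omega$-convergent to $x$, I invoke Theorem 3.5, which asserts exactly that under $\Omega$-convergence of such a periodic E-sequence the trajectory $(x_n)_{n\geqslant 0}$ is periodic (with non-periodic part $(x_0\cdots x_l)$ and periodic part $\overline{x_{l+1}\cdots x_{l+r}}$). This yields the periodicity of the trajectory of $x$, and since $x$ was arbitrary the conclusion holds for every odd positive integer.

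There is essentially no obstacle here, because all of the real work has already been done: Theorem 3.4 forces $2^s>3^r$ in the $\Omega$-convergent periodic case, and Theorem 3.5 uses Proposition 3.3(i) together with the monotone-stabilization of $x_0^{1,n}$ from Proposition 2.8(iv) to pin down $u_{rk+l}=0$ and hence extract an explicit periodic trajectory. The only subtlety worth flagging in the write-up is the logical step in passing from \emph{``every non-periodic E-sequence is $\Omega$-divergent''} to \emph{``every $\Omega$-convergent E-sequence is periodic''}; this is a one-line contrapositive, but it is the entire content of the theorem, so I would state it explicitly rather than leave it implicit.
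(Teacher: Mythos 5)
Your proposal is correct and matches the paper's own proof of this theorem essentially verbatim: fix an odd $x$, note its E-sequence is $\Omega$-convergent to $x$ hence (by the hypothesis, taken contrapositively) periodic, and then apply Theorem 3.5 to conclude the trajectory is periodic. Your extra remarks about Theorem 3.4 forcing $2^s>3^r$ and Proposition 2.8(iv) pinning down $u_{rk+l}=0$ correctly describe where the real work lives, exactly as in the paper.
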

\begin{proof}
Suppose that  $x$  is an odd positive integer,   $(x_n )_{n\geqslant 0} $  and
$(a_n )_{n\geqslant 1} $  are its trajectory and E-sequence, respectively.  Then
$\Omega -\lim  a_n  =x$.  Thus $(a_n )_{n\geqslant 1} $  is  periodic by the assumption.  Hence $(x_n )_{n\geqslant 0} $ is periodic by  Theorem 3.5.
\end{proof}

\section{Non-periodic E-sequences}
For any real number $\alpha $, $\{\alpha \}$ denotes its fractional part.
The following lemma  is due to  Matthews and Watts (see Lemma 2(b) in [4]).
We present its proof for the reader's convenience.

\begin{lemma}
 Let $(a_n )_{n\geqslant 1} $ be an E-sequence such that
$\Omega -\lim {\kern 1pt}{\kern 1pt}{\kern 1pt}{\kern 1pt}{\kern 1pt}a_n
=x_{0} $ and $(x_n )_{n\geqslant 0} $ is unbounded. Then
$\mathop {\overline {\lim } }\limits_{n\to \infty } \cfrac{b_n }{n}\leqslant
\log _23$.
\end{lemma}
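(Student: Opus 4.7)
The plan is to apply the Matthews and Watts formula $x_n=\cfrac{3^nx_0}{2^{b_n}}\prod_{k=0}^{n-1}(1+\cfrac{1}{3x_k})$ and take $\log_2$. Since $\Omega\text{-}\lim a_n=x_0$ means $x_n$ is an odd positive integer for every $n$, the trivial bound $x_n\geqslant 1$ gives
\[
\frac{b_n}{n}\;\leqslant\;\log_2 3+\frac{\log_2 x_0}{n}+\frac{1}{n}\sum_{k=0}^{n-1}\log_2\!\Big(1+\frac{1}{3x_k}\Big).
\]
The first correction term tends to $0$, so everything reduces to showing that the Ces\`aro mean $S_n/n:=\tfrac{1}{n}\sum_{k=0}^{n-1}\log_2(1+1/(3x_k))$ satisfies $\limsup_{n\to\infty}S_n/n\leqslant 0$ (and hence equals $0$, since each summand is positive).

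The crucial input is the unboundedness of $(x_n)$. Since $a_{n+1}$ is the unique positive integer making $(3x_n+1)/2^{a_{n+1}}$ odd, the trajectory is deterministic in $x_n$: if $x_m=x_{m'}$ for some $m<m'$, then $(x_n)_{n\geqslant 0}$ is eventually periodic and therefore bounded. Contrapositively, unboundedness forces all $x_k$ to be distinct odd positive integers, so for every $M\geqslant 1$
\[
\#\{k\geqslant 0 : x_k\leqslant M\}\;\leqslant\;\lceil M/2\rceil.
\]

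I would then bound $S_n$ by splitting at the threshold $M$. On $\{k<n:x_k\leqslant M\}$, each term is at most $\log_2(4/3)$ and there are at most $\lceil M/2\rceil$ such indices, contributing a constant $C(M)$. On $\{k<n:x_k> M\}$, monotonicity of $t\mapsto\log_2(1+1/(3t))$ gives each term at most $\log_2(1+1/(3(M+2)))$, and there are at most $n$ such indices. Dividing by $n$ and letting $n\to\infty$ yields
\[
\limsup_{n\to\infty}\frac{S_n}{n}\;\leqslant\;\log_2\!\Big(1+\frac{1}{3(M+2)}\Big),
\]
which holds for every $M\geqslant 1$; sending $M\to\infty$ collapses the right-hand side to $0$ and completes the proof.

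The main obstacle, and the conceptual content of the lemma, is the observation that unboundedness of $(x_n)$ is equivalent to all trajectory values being pairwise distinct, which forces the proportion of small $x_k$ in the first $n$ terms to vanish. Once this is recognised, the splitting argument is essentially a Ces\`aro/tail estimate and involves no further subtlety; the bound $\ln(1+t)\leqslant t$ is available as a sharper estimate on individual terms but is not needed for the limsup statement.
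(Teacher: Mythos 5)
Your proof is correct and rests on the same two pillars as the paper's own argument: the Matthews--Watts formula and the observation that unboundedness of $(x_n)$ forces all trajectory values to be distinct (the paper asserts this distinctness in a single line; your justification via the determinism of $x_n \mapsto x_{n+1}$ is exactly the reason it holds). Where you genuinely diverge is in how distinctness is converted into the vanishing of the correction term $\frac{1}{n}\sum_{k=0}^{n-1}\log_2(1+\frac{1}{3x_k})$. The paper uses a rearrangement bound: sorting the distinct positive integers increasingly gives $y_k \geqslant k$, hence $\prod_{k=0}^{n-1}(1+\frac{1}{3x_k}) \leqslant \prod_{k=1}^{n}(1+\frac{1}{3k})$, and then $\log(1+t)\leqslant t$ reduces everything to the harmonic sum, yielding the quantitative estimate $\frac{b_n}{n}\leqslant \log_2 3+\frac{\log_2 x_0}{n}+\frac{1}{n\log 8}\sum_{k=1}^{n}\frac{1}{k}$, i.e.\ an explicit $O(\frac{\log n}{n})$ error before taking the limsup. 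Your threshold split at $M$, using the count $\#\{k: x_k\leqslant M\}\leqslant \lceil M/2\rceil$ together with the double limit $n\to\infty$ then $M\to\infty$, is equally valid and somewhat more elementary (no rearrangement and no harmonic-sum estimate), at the cost of producing no explicit rate unless you let $M=M(n)$ grow with $n$ (e.g.\ $M\sim\sqrt{n}$ gives an $O(n^{-1/2})$ error, still weaker than the paper's bound). One pedantic repair: for even $M$, an odd $x_k>M$ only guarantees $x_k\geqslant M+1$, so your per-term tail bound should read $\log_2(1+\frac{1}{3(M+1)})$, or you should restrict to odd $M$; this changes nothing in the limit.
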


\begin{proof}
From $x_k =\cfrac{3x_{k-1} +1}{2^{a_k }}$,  we have $2^{a_k
}=\cfrac{3x_{k-1} +1}{x_k }$. Then
$$2^{b_n }=\prod\limits_{k=1}^n {2^{a_k }}
=\prod\limits_{k=1}^n {\cfrac{3x_{k-1} +1}{x_k }=\cfrac{x_0 }{x_n
}\prod\limits_{k=1}^n {\cfrac{3x_{k-1} +1}{x_{k-1} }} } =\cfrac{3^nx_0 }{x_n
}\prod\limits_{k=1}^n {(1+\cfrac{1}{3x_{k-1} })}.$$ Thus

$$x_n =\cfrac{3^nx_0 }{2^{b_n }}\prod\limits_{k=1}^n {(1+\cfrac{1}{3x_{k-1} })}
$$ which we call the Matthews and Watts's formula (see Lemma 1(b)  in [4]).

Since $(x_n )_{n\geqslant 1} $ is unbounded, all $x_n $ are distinct. Then
$$1\leqslant x_n \leqslant \cfrac{3^nx_0 }{2^{b_n }}\prod\limits_{k=1}^n
{(1+\cfrac{1}{3k})}.$$
Thus $$0\leqslant \log \cfrac{3^n}{2^{b_n }}+\log x_0
+\sum\limits_{k=1}^n {\log (1+\cfrac{1}{3k})} \leqslant \log 3^n-\log 2^{b_n
}+\log x_0 +\sum\limits_{k=1}^n {\cfrac{1}{3k}}.$$ Hence $$\log 2^{b_n
}\leqslant \log 3^n+\log x_0 +\cfrac{1}{3}\sum\limits_{k=1}^n {\cfrac{1}{k}}
. $$Therefore $$\cfrac{b_n }{n}\leqslant \log _2 3+\cfrac{\log _2 x_0
}{n}+\cfrac{1}{n\log 8}\sum\limits_{k=1}^n {\cfrac{1}{k}}.$$ Then
$$\mathop {\overline {\lim } }\limits_{n\to \infty } \cfrac{b_n }{n}\leqslant
\log _23.$$
\end{proof}
\begin{theorem} Let $(a_n )_{n\geqslant 1} $ be a non-periodic E-sequence
such that $\mathop {\overline {\lim } }\limits_{n\to \infty } \cfrac{b_n
}{n}>\log _23$.  Then\\
$\Omega-\lim a_n=\infty .$
\end{theorem}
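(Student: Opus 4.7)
The plan is a direct proof by contraposition, using Lemma 4.1 as the workhorse. Suppose, for contradiction, that $(a_n)_{n\geqslant 1}$ is \emph{not} $\Omega$-divergent; then there exists an odd positive integer $x_0$ with $\Omega-\lim a_n = x_0$, so $(a_n)_{n\geqslant 1}$ is the genuine E-sequence of $x_0$ and the trajectory $(x_n)_{n\geqslant 0}$ defined by (1.1) is available to us.

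I would then split into two cases according to whether $(x_n)_{n\geqslant 0}$ is bounded. If $(x_n)$ is \emph{unbounded}, Lemma 4.1 applies verbatim and yields $\overline{\lim}_{n\to\infty} b_n/n \leqslant \log_2 3$, which directly contradicts the hypothesis $\overline{\lim}_{n\to\infty} b_n/n > \log_2 3$. This is the case where the Matthews--Watts formula and the $\prod(1+1/(3k))$ estimate already embedded in Lemma 4.1 do all the work.

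The remaining case is that $(x_n)$ is \emph{bounded}. Since each $x_n$ is a positive integer, the trajectory takes values in a finite set, so by the pigeonhole principle there exist indices $i<j$ with $x_i=x_j$. But the recursion $x_{n+1}=(3x_n+1)/2^{a_{n+1}}$ (together with the fact that $a_{n+1}$ is forced to equal the $2$-adic valuation of $3x_n+1$) makes each term $x_n$ determine both $x_{n+1}$ and $a_{n+1}$ uniquely; hence $(x_n)_{n\geqslant 0}$ is eventually periodic, and consequently so is $(a_n)_{n\geqslant 1}$. This contradicts the hypothesis that $(a_n)_{n\geqslant 1}$ is non-periodic.

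Since both cases produce a contradiction, the assumption $\Omega-\lim a_n \neq \infty$ is untenable, giving $\Omega-\lim a_n = \infty$. There is no real obstacle here: the theorem is essentially a clean corollary of Lemma 4.1 once one observes that a bounded integer trajectory is automatically eventually periodic and therefore incompatible with a non-periodic E-sequence. The only subtlety worth stating carefully is the boundedness dichotomy and the fact that periodicity passes from $(x_n)$ to $(a_n)$ via the uniqueness of the $2$-adic valuation.
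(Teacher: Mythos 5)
Your proposal is correct and follows essentially the same route as the paper: assume $\Omega$-convergence, use Lemma 4.1 to rule out an unbounded trajectory given $\overline{\lim}\,b_n/n>\log_2 3$, and then derive periodicity of $(a_n)_{n\geqslant 1}$ from the bounded trajectory, contradicting non-periodicity. You merely make explicit two steps the paper leaves implicit --- that a bounded integer trajectory is eventually periodic by pigeonhole plus the determinism $a_{n+1}=v_2(3x_n+1)$, and that periodicity passes from $(x_n)$ to $(a_n)$ --- which is a welcome clarification but not a different argument.
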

\begin{proof}Suppose that $\Omega -\lim a_n =x_0 $ for some positive integer $x_0 $. It
follows from Lemma 4.1 and $\mathop {\overline {\lim } }\limits_{n\to \infty }
\cfrac{b_n }{n}>\log _23$ that $(x_n )_{n\geqslant 0} $ is bounded.
Then $(x_n )_{n\geqslant 0} $ is periodic. Thus $(a_n )_{n\geqslant
1} $ is periodic, which contradicts the non-periodicity of $(a_n
)_{n\geqslant 1} $. Hence $\Omega -\lim a_n =\infty $.
\end{proof}

The following lemma  is the well-known Wendel's inequality (see~\cite{WJ}). Lemma 4.4
is a  consequence of an easy calculation.
\begin{lemma}
Let $x$ be a positive real number and let $s\in (0,1)$.
Then $\cfrac{\Gamma (x+s)}{\Gamma
(x)}\leqslant x^s$.
\end{lemma}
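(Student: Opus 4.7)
The plan is to prove Wendel's inequality in the classical way, using the integral representation $\Gamma(y)=\int_0^\infty t^{y-1}e^{-t}\,dt$ together with H\"older's inequality. The crucial observation is that for any $s\in(0,1)$, the integrand defining $\Gamma(x+s)$ admits a natural factorization
\[
t^{x+s-1}e^{-t} = \bigl(t^{x}e^{-t}\bigr)^{s}\cdot\bigl(t^{x-1}e^{-t}\bigr)^{1-s},
\]
which is exactly the shape needed to invoke H\"older with conjugate exponents $p=1/s$ and $q=1/(1-s)$.

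Applying H\"older's inequality with these exponents yields
\[
\Gamma(x+s) = \int_0^\infty \bigl(t^{x}e^{-t}\bigr)^{s}\bigl(t^{x-1}e^{-t}\bigr)^{1-s}\,dt
\leqslant \left(\int_0^\infty t^{x}e^{-t}\,dt\right)^{\!s}\!\left(\int_0^\infty t^{x-1}e^{-t}\,dt\right)^{\!1-s}
= \Gamma(x+1)^{s}\,\Gamma(x)^{1-s}.
\]
Then I would invoke the fundamental functional equation $\Gamma(x+1)=x\,\Gamma(x)$, which converts the right-hand side into
\[
\Gamma(x+1)^{s}\Gamma(x)^{1-s} = \bigl(x\,\Gamma(x)\bigr)^{s}\Gamma(x)^{1-s} = x^{s}\,\Gamma(x).
\]
Dividing by $\Gamma(x)>0$ gives $\Gamma(x+s)/\Gamma(x)\leqslant x^{s}$, as required.

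I do not anticipate a substantive obstacle: both the integral representation and H\"older's inequality are standard, and the exponents $1/s$ and $1/(1-s)$ are conjugate precisely because $s\in(0,1)$. The only small points worth noting in writing up are (a) the convergence of both integrals on the right is guaranteed by $x>0$, and (b) the inequality is strict unless $s\in\{0,1\}$, which is excluded, so in fact one obtains strict inequality; but the weak form stated in the lemma suffices for the application in Theorem 4.2.
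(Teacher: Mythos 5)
Your proof is correct: the factorization $t^{x+s-1}e^{-t}=\bigl(t^{x}e^{-t}\bigr)^{s}\bigl(t^{x-1}e^{-t}\bigr)^{1-s}$, H\"older's inequality with conjugate exponents $1/s$ and $1/(1-s)$, and the functional equation $\Gamma(x+1)=x\,\Gamma(x)$ fit together exactly as you describe, and the side remarks on convergence and strictness are also accurate. The paper itself gives no proof --- it simply cites Wendel's 1948 note --- and your argument is precisely the classical H\"older-based proof from that reference, so there is nothing to reconcile.
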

\begin{lemma} Let $a$ and $b$ be two integers with $a\geqslant  1$ and $a\nmid b$. Then
$\prod\limits_{k=0}^n {(1+\frac{z}{ak+b})} =\cfrac{\Gamma (\frac{b}{a})\Gamma
(\frac{b+z}{a}+n+1)}{\Gamma (\frac{b+z}{a})\Gamma (\frac{b}{a}+n+1)}$.
\end{lemma}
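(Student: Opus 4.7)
The plan is to reduce the identity to the defining functional equation $\Gamma(x+1)=x\Gamma(x)$ after a routine algebraic rewrite. First I would observe that each factor can be rewritten as
\[
1 + \frac{z}{ak+b} \;=\; \frac{ak+b+z}{ak+b} \;=\; \frac{k + \frac{b+z}{a}}{k + \frac{b}{a}},
\]
so that the whole product separates into
\[
\prod_{k=0}^{n}\left(1 + \frac{z}{ak+b}\right) \;=\; \frac{\prod_{k=0}^{n}\bigl(k + \tfrac{b+z}{a}\bigr)}{\prod_{k=0}^{n}\bigl(k + \tfrac{b}{a}\bigr)}.
\]

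Next I would invoke the standard identity
\[
\prod_{k=0}^{n}(k+c) \;=\; c(c+1)(c+2)\cdots(c+n) \;=\; \frac{\Gamma(c+n+1)}{\Gamma(c)},
\]
which is obtained by iterating $\Gamma(x+1)=x\Gamma(x)$ exactly $n+1$ times starting from $\Gamma(c)$. Applying this identity once with $c=(b+z)/a$ to the numerator and once with $c=b/a$ to the denominator gives
\[
\frac{\Gamma(b/a)\,\Gamma\bigl((b+z)/a+n+1\bigr)}{\Gamma\bigl((b+z)/a\bigr)\,\Gamma(b/a+n+1)},
\]
which is precisely the claimed right-hand side.

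The hypothesis $a\geqslant 1$ and $a\nmid b$ is used exactly to guarantee that $b/a$ is not an integer, and in particular not a non-positive integer, so that $\Gamma(b/a)$ and $\Gamma(b/a+n+1)$ are finite and nonzero and no factor $k+b/a$ in the denominator vanishes. The only subtlety would be if $(b+z)/a$ happened to be a non-positive integer, but in the intended application $z$ is of the form $1$ (applied later to products such as $\prod(1+\tfrac{1}{3x_k})$), where this degeneracy does not arise; more generally the identity holds as an equality of meromorphic functions of $z$ away from that discrete set. There is essentially no obstacle in this proof — it is a bookkeeping statement whose entire content is the rewrite above combined with the functional equation of $\Gamma$, which is why the author describes it as an easy calculation.
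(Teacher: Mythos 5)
Your proof is correct and is precisely the ``easy calculation'' the paper alludes to --- the paper omits any proof of this lemma, and your telescoping via $1+\frac{z}{ak+b}=\bigl(k+\frac{b+z}{a}\bigr)/\bigl(k+\frac{b}{a}\bigr)$ together with $\prod_{k=0}^{n}(k+c)=\Gamma(c+n+1)/\Gamma(c)$ is the intended argument. One minor correction to your closing remark: in the paper's actual application (Lemma 4.5) the factors $1+\frac{1}{3(6k+1)}$ and $1+\frac{1}{3(6k+5)}$ correspond to $a=6$, $z=\frac{1}{3}$ rather than $z=1$, giving $\frac{b+z}{a}\in\{\frac{2}{9},\frac{8}{9}\}$, so the degeneracy you flag indeed does not arise.
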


\begin{lemma}$\prod\limits_{
1\leqslant k<3n, \,\,
 k\equiv 1,5(\bmod {\kern 1pt}{\kern 1pt}6)
 } {(1+\frac{1}{3k})} <1.5n^{\frac{1}{9}}$ for all $n\geqslant 1$.
\end{lemma}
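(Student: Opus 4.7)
The plan is to split the product into its $k\equiv 1\pmod 6$ and $k\equiv 5\pmod 6$ halves, use Lemma 4.4 to write each half as a ratio of Gamma values, and apply Wendel's inequality (Lemma 4.3) to bound each such ratio by a power of the number of terms. With parameters chosen so that the Wendel exponent is $s=1/18$ in both halves, the two powers will combine to yield exactly $n^{1/9}$, matching the target.

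Writing $k=6m+1$ for $m=0,\ldots,N_1=\lfloor(3n-2)/6\rfloor$ and $k=6m+5$ for $m=0,\ldots,N_2=\lfloor(3n-6)/6\rfloor$ (empty when $n=1$), both indices satisfy $N_1,N_2\leqslant n/2$. Applying Lemma 4.4 with $(a,b,z)=(18,3,1)$ and $(18,15,1)$, respectively, yields
\[
\prod_{k\equiv 1}\left(1+\tfrac{1}{3k}\right)=\frac{\Gamma(1/6)\,\Gamma(2/9+N_1+1)}{\Gamma(2/9)\,\Gamma(1/6+N_1+1)},\quad \prod_{k\equiv 5}\left(1+\tfrac{1}{3k}\right)=\frac{\Gamma(5/6)\,\Gamma(8/9+N_2+1)}{\Gamma(8/9)\,\Gamma(5/6+N_2+1)}.
\]
In each ratio the numerator argument exceeds the denominator argument by exactly $s=1/18\in(0,1)$ (since $2/9-1/6=8/9-5/6=1/18$), so Lemma 4.3 gives the upper bounds $\tfrac{\Gamma(1/6)}{\Gamma(2/9)}(N_1+7/6)^{1/18}$ and $\tfrac{\Gamma(5/6)}{\Gamma(8/9)}(N_2+11/6)^{1/18}$, respectively.

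Multiplying the two bounds leaves $C\cdot\bigl((N_1+7/6)(N_2+11/6)\bigr)^{1/18}$, where $C=\Gamma(1/6)\Gamma(5/6)/(\Gamma(2/9)\Gamma(8/9))$. Euler reflection gives $\Gamma(1/6)\Gamma(5/6)=2\pi$, and numerical evaluation of the remaining Gammas yields $C<1.43$. Since $N_1,N_2\leqslant n/2$, the $N$-dependent factor is dominated by $((n/2+7/6)(n/2+11/6))^{1/18}$, which for large $n$ is close to $n^{1/9}/2^{1/9}\approx 0.926\,n^{1/9}$, so the total bound is well under $1.5\,n^{1/9}$ asymptotically. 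The main obstacle is then purely numerical bookkeeping: one picks an explicit threshold $n_0$ beyond which the algebraic inequality $C\cdot((N_1+7/6)(N_2+11/6))^{1/18}<1.5\,n^{1/9}$ is clear, and verifies the finitely many remaining cases $n<n_0$ by direct computation (e.g.\ $n=1$ gives $4/3$ and $n=2$ gives $64/45$, both comfortably below $1.5\,n^{1/9}$).
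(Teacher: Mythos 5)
Your proposal is correct and is essentially the paper's own proof: the same split into the residue classes $1,5 \pmod 6$, the same Gamma-ratio identity (Lemma 4.4) with Wendel exponent $s=1/18$, and the same constant $\Gamma(\frac{1}{6})\Gamma(\frac{5}{6})/(\Gamma(\frac{2}{9})\Gamma(\frac{8}{9}))\approx 1.4196$. The only cosmetic difference is that you handle the term counts uniformly with floor functions and finish with a threshold-plus-finite-check, where the paper splits into the cases $2\mid n$ and $2\nmid n$ and closes with the algebraic bound $(\frac{n^2}{3})^{1/18}$.
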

\begin{proof}
Let $2\vert n$. Then $$\prod\limits_{k=0}^{\frac{n}{2}-1}
{(1+\frac{1}{3(6k+1)})} =\frac{\Gamma (\frac{1}{6})\Gamma
(\frac{n}{2}+\frac{2}{9})}{\Gamma (\frac{2}{9})\Gamma
(\frac{n}{2}+\frac{1}{6})}\leqslant \frac{\Gamma (\frac{1}{6})}{\Gamma
(\frac{2}{9})}(\frac{n}{2}+\frac{1}{6})^{\frac{1}{18}}$$ and

$$\prod\limits_{k=0}^{\frac{n}{2}-1} {(1+\frac{1}{3(6k+5)})} =\frac{\Gamma
(\frac{5}{6})\Gamma (\frac{n}{2}+\frac{8}{9})}{\Gamma (\frac{8}{9})\Gamma
(\frac{n}{2}+\frac{5}{6})}\leqslant \frac{\Gamma (\frac{5}{6})}{\Gamma
(\frac{8}{9})}(\frac{n}{2}+\frac{5}{6})^{\frac{1}{18}}$$ by the Wendel's
inequality. Thus

$\prod\limits_{
 1\leqslant k<3n,\,\, k\equiv 1,5(\bmod {\kern 1pt}{\kern 1pt}6) } {(1+\frac{1}{3k})} =\prod\limits_{k=0}^{\frac{n}{2}-1}
{(1+\frac{1}{3(6k+1)})\prod\limits_{k=0}^{\frac{n}{2}-1}
{(1+\frac{1}{3(6k+5)})} } \leqslant $
\[
\frac{\Gamma (\frac{1}{6})\Gamma (\frac{5}{6})}{\Gamma (\frac{2}{9})\Gamma
(\frac{8}{9})}(\frac{n}{2}+\frac{5}{6})^{\frac{1}{18}}(\frac{n}{2}+\frac{1}{6})^{\frac{1}{18}}\leqslant
\mbox{1.4196}(\frac{n^2}{3})^{\frac{1}{18}}<\mbox{1.5}n^{\frac{1}{9}}.
\]
Let $2\nmid n$. Then $$\prod\limits_{k=0}^{\frac{n+1}{2}-1}
{(1+\frac{1}{3(6k+1)})} =\frac{\Gamma (\frac{1}{6})\Gamma
(\frac{n}{2}+\frac{13}{18})}{\Gamma (\frac{2}{9})\Gamma
(\frac{n}{2}+\frac{2}{3})}\leqslant \frac{\Gamma (\frac{1}{6})}{\Gamma
(\frac{2}{9})}(\frac{n}{2}+\frac{2}{3})^{\frac{1}{18}}$$ and

$$\prod\limits_{k=0}^{\frac{n+1}{2}-2} {(1+\frac{1}{3(6k+5)})} =\frac{\Gamma
(\frac{5}{6})\Gamma (\frac{n}{2}+\frac{7}{18})}{\Gamma (\frac{8}{9})\Gamma
(\frac{n}{2}+\frac{1}{3})}\leqslant \frac{\Gamma (\frac{5}{6})}{\Gamma
(\frac{8}{9})}(\frac{n}{2}+\frac{1}{3})^{\frac{1}{18}}$$ by the Wendel's
inequality.  Thus

$\prod\limits_{1\leqslant k<3n,\,\,\\
 k\equiv 1,5(\bmod {\kern 1pt}{\kern 1pt}6)} {(1+\frac{1}{3k})} =\prod\limits_{k=0}^{\frac{n+1}{2}-1}
{(1+\frac{1}{3(6k+1)})\prod\limits_{k=0}^{\frac{n+1}{2}-2}
{(1+\frac{1}{3(6k+5)})} } \leqslant $
\[
\frac{\Gamma (\frac{1}{6})\Gamma (\frac{5}{6})}{\Gamma (\frac{2}{9})\Gamma
(\frac{8}{9})}(\frac{n}{2}+\frac{2}{3})^{\frac{1}{18}}(\frac{n}{2}+\frac{1}{3})^{\frac{1}{18}}<1.5n^{\frac{1}{9}}.
\]
\end{proof}
\begin{theorem}
Let $2^{b_n }x_n -3^nx_0 =B_n $ such that $1\leqslant x_0
<2^{b_n },  1\leqslant x_n <3^n$, $3\nmid x_0 $,  and $x_0 ,\cdots ,x_{n-1} $ are distinct
integers. Then $x_0 >\cfrac{B_n }{3^n(1.5n^{\frac{1}{9}}-1)}$.
\end{theorem}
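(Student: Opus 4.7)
The plan is to combine the Matthews-Watts formula from Lemma 4.1 with the defining identity $2^{b_n}x_n = 3^n x_0 + B_n$. Since $x_n = \frac{3^n x_0 + B_n}{2^{b_n}}$ and also $x_n = \frac{3^n x_0}{2^{b_n}}\prod_{k=0}^{n-1}(1+\frac{1}{3x_k})$, cancelling $\frac{3^n x_0}{2^{b_n}}$ yields the sharp identity
\[
1 + \frac{B_n}{3^n x_0} = \prod_{k=0}^{n-1}\Bigl(1+\frac{1}{3x_k}\Bigr).
\]
My strategy is to upper-bound the right-hand product by the analogous product taken over the $n$ smallest positive integers congruent to $1$ or $5$ modulo $6$, at which point Lemma 4.5 caps it at $1.5\, n^{1/9}$. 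Rearrangement then delivers $x_0 > \frac{B_n}{3^n(1.5\, n^{1/9}-1)}$.

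To make this precise, I must first certify that $x_0,\ldots,x_{n-1}$ are positive odd integers coprime to $3$. Positivity follows from the argument in Proposition 2.3: a non-positive $x_k$ propagates through the recursion $x_{k+1}=\frac{3x_k+1}{2^{a_{k+1}}}$ to force $x_n \leq 0$, contradicting $x_n \geq 1$; and $x_k = 0$ is impossible since then $x_{k+1} = 2^{-a_{k+1}} \notin \mathbb{Z}$. Oddness of $x_0,\ldots,x_{n-1}$ follows from $a_k \geq 1$: the identity $2^{a_k}x_k = 3x_{k-1}+1$ forces $3x_{k-1}+1$ to be even, hence $x_{k-1}$ odd. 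Coprimality to $3$ propagates inductively from $3\nmid x_0$: if $3 \nmid x_{k-1}$ then $3x_{k-1}+1 \equiv 1 \pmod 3$, so $3 \nmid x_k$. Together these confine each $x_k$ to $\{m \geq 1 : m \equiv 1, 5 \pmod 6\} = \{1,5,7,11,13,\ldots\}$.

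For the main estimate, the map $t \mapsto 1+\frac{1}{3t}$ is strictly decreasing on $(0,\infty)$, so the product $\prod_{k=0}^{n-1}(1+\frac{1}{3x_k})$ over $n$ \emph{distinct} elements of this set is maximized by taking the $n$ smallest such values. A short parity count shows these smallest $n$ values are exactly the elements of $\{1 \leq j < 3n : j \equiv 1, 5 \pmod 6\}$: for $n=2m$ the set has $2m=n$ elements with maximum $6m-1=3n-1$, and for $n=2m+1$ it has $2m+1=n$ elements with maximum $6m+1=3n-2$. Hence Lemma 4.5 gives
\[
\prod_{k=0}^{n-1}\Bigl(1+\frac{1}{3x_k}\Bigr) \leq \prod_{\substack{1 \leq j < 3n \\ j \equiv 1, 5 \,(\bmod\, 6)}}\Bigl(1+\frac{1}{3j}\Bigr) < 1.5\, n^{1/9}.
\]

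Substituting into the identity yields $\frac{B_n}{3^n x_0} < 1.5\, n^{1/9} - 1$, which rearranges to the desired bound. The only conceptual step requiring any care is the extremal / counting argument in the last paragraph; everything else is either a direct invocation of earlier formulae or a routine propagation-of-parity-and-coprimality argument.
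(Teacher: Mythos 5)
Your proposal is correct and takes essentially the same route as the paper's own proof: the Matthews--Watts identity $\cfrac{2^{b_n}x_n}{3^nx_0}=\prod_{k=0}^{n-1}\bigl(1+\cfrac{1}{3x_k}\bigr)$, the extremal bound of this product over distinct values $\equiv 1,5 \pmod 6$ by the product over $\{1\leqslant k<3n,\ k\equiv 1,5 \pmod 6\}$, and then Lemma 4.5 followed by the rearrangement via $2^{b_n}x_n=3^nx_0+B_n$. The only difference is that you make explicit what the paper leaves implicit --- the positivity, oddness, and coprimality to $3$ of $x_0,\ldots,x_{n-1}$, and the parity count identifying the $n$ smallest admissible values as exactly those below $3n$ --- which is a matter of exposition, not of method.
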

\begin{proof}
From the Matthews and Watts's formula and Lemma 4.5, we
have
$$\cfrac{2^{b_n }x_n }{3^nx_0 }=\prod\limits_{k=1}^n {(1+\cfrac{1}{3x_{k-1}
})\leqslant } \prod\limits_{
 1\leqslant k<3n,\,\,
 k\equiv 1,5(\bmod {\kern 1pt}{\kern 1pt}6)
} {(1+\cfrac{1}{3k})} <1.5n^{\frac{1}{9}}.$$
Then $\cfrac{3^nx_0
+B_n }{3^nx_0 }<1.5n^{\frac{1}{9}}$. Thus $x_0 >\cfrac{B_n
}{3^n(1.5n^{\frac{1}{9}}-1)}$.
\end{proof}

\begin{corollary}
 Let $\theta \geqslant \mbox{log}_23 $ be
an irrational number.  Define
$a_n =[n\theta ]-[(n-1)\theta ]$.  Then $\Omega
-\lim {\kern 1pt}{\kern 1pt}{\kern 1pt}a_n =\infty $.
\end{corollary}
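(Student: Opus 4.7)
The argument breaks into two cases according to whether $\theta > \log_2 3$ or $\theta = \log_2 3$. Telescoping gives $b_n = [n\theta]$. Since $\theta$ is irrational, $(a_n)_{n\geqslant 1}$ is non-periodic: if a tail satisfied $a_n = a_{n+r}$, then $[(n+r)\theta] - [n\theta]$ would be eventually constant, forcing the sequence $\{n\theta\} \pmod 1$ to be eventually periodic, which contradicts its equidistribution.

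If $\theta > \log_2 3$, then $b_n/n \to \theta > \log_2 3$ and Theorem 4.2 immediately gives $\Omega-\lim a_n = \infty$. The delicate case is the boundary $\theta = \log_2 3$, where Theorem 4.2 is silent and one must invoke Theorem 4.6 instead. Suppose for contradiction that $\Omega-\lim a_n = x_0$ for some odd positive integer $x_0$. Replacing $(a_n)$ by its one-step shift if necessary (this preserves non-periodicity, keeps $b_n/n \to \log_2 3$, and passes from $x_0$ to $x_1=(3x_0+1)/2^{a_1}$, which is coprime to $3$), we may assume $3 \nmid x_0$. Because $(a_n)$ is non-periodic, $(x_n)$ is not eventually periodic, hence unbounded, hence all $x_n$ are pairwise distinct. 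Theorem 4.6 then gives, for every $n$,
\[
x_0 > \frac{B_n}{3^n(1.5\, n^{1/9} - 1)}.
\]

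The remaining step is to bound $B_n$ from below by exploiting $\theta = \log_2 3$ sharply. From $b_i = [i\log_2 3] > i\log_2 3 - 1$ we obtain $2^{b_i} > 3^i/2$, and therefore
\[
B_n = \sum_{i=0}^{n-1} 3^{n-1-i}\, 2^{b_i} > \frac{1}{2}\sum_{i=0}^{n-1} 3^{n-1} = \frac{n \cdot 3^{n-1}}{2}.
\]
Substituting yields $x_0 > n/\bigl(6(1.5\, n^{1/9} - 1)\bigr)$, which tends to infinity with $n$ and contradicts the finiteness of $x_0$. Hence $\Omega-\lim a_n = \infty$ in the boundary case as well.

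The main obstacle is precisely this boundary case $\theta = \log_2 3$: Theorem 4.2 yields nothing, so one must wring the sharp estimate $B_n \gtrsim n\cdot 3^{n-1}$ out of the fact that $b_i$ is within $1$ of $i\log_2 3$, and then verify the hypotheses of Theorem 4.6 (distinctness of the $x_i$ and $3 \nmid x_0$) by combining the shift trick with the non-periodicity of the Beatty-type sequence $(a_n)$.
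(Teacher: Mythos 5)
Correct, and essentially the paper's own argument: the same case split into $\theta>\log_2 3$ (Theorem 4.2, with non-periodicity coming from the irrationality of $\theta$) and $\theta=\log_2 3$ (Theorem 4.6 combined with $2^{[i\theta]}>3^i/2$, giving $B_n/3^n>n/6$ where the paper gets the equivalent bound $n/8$). If anything you are more scrupulous than the paper, which invokes Theorem 4.6 without verifying its hypotheses $3\nmid x_0$ and the distinctness of $x_0,\dots,x_{n-1}$; your shift trick and non-periodicity argument legitimately supply both, and the estimate survives the shift since $b_i'=[(i+1)\theta]-[\theta]\geqslant [i\theta]$ keeps the lower bound on $B_n$ valid.
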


\begin{proof}
Let $\theta =\mbox{log}_23 $. Then
$\cfrac{B_n }{3^n}=\sum\limits_{k=1}^{n}
{\frac{2^{[(k-1)\log_23]}}{3^k}} > \cfrac{n}{8}$ by
$\cfrac{2^{[(k-1)\log_23]}}{3^k}>\cfrac{1}{8}$. Thus\\
$\cfrac{B_n }{3^n(1.5n^{\frac{1}{9}}-1)}>\cfrac{n}{8(1.5n^{\frac{1}{9}}-1)}\to
\infty $, as $n\to \infty $. Hence $\Omega -\lim {\kern 1pt}{\kern
1pt}{\kern 1pt}a_n =\infty $ by Theorem 4.6.

Let $\theta >\log_23 $. Then $\mathop {\lim
}\limits_{n\to \infty } {\kern 1pt}{\kern 1pt}{\kern 1pt}{\kern
1pt}\cfrac{b_n }{n}=\mathop {\lim }\limits_{n\to \infty } {\kern 1pt}{\kern
1pt}{\kern 1pt}{\kern 1pt}\cfrac{\mbox{[n}\theta ]}{n}=\theta
>\log_23 $. Since $\theta $ is an irrational number,
$(a_n )_{n\geqslant 1} $ is non-periodic. Thus $\Omega -\lim {\kern
1pt}{\kern 1pt}{\kern 1pt}a_n =\infty $ by Theorem 4.2.
\end{proof}

\begin{lemma} Let $x$  and $n$ be two positive integers.  Then  (i) $\prod\limits_{k=0}^{n-1}
{(1+\cfrac{1}{3(x+k)})} \le 1+\cfrac{n}{3x}; $ (ii) $
\prod\limits_{k=0}^{n-1} {(1+\cfrac{1}{3(x -k)})} \ge 1+\cfrac{n}{3x }
$  for $x \ge n$;
(iii) $
\prod\limits_{k=0}^{n-1} {(1+\cfrac{1}{3(x -k)})} >\cfrac{3x }{3x -n}
$   for  $x\ge n\ge 2$.
\end{lemma}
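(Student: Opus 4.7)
The plan is to prove all three parts by short algebraic arguments: induction on $n$ for (i) and (iii), and a direct application of the Weierstrass product inequality for (ii).

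For part (i), I would induct on $n$, with the trivial base case $n=1$. The inductive step requires multiplying the hypothesis $\prod_{k=0}^{n-1}(1+\frac{1}{3(x+k)}) \le 1+\frac{n}{3x}$ by $1+\frac{1}{3(x+n)}$ and checking that
\[
\Bigl(1+\frac{n}{3x}\Bigr)\Bigl(1+\frac{1}{3(x+n)}\Bigr) \le 1+\frac{n+1}{3x}.
\]
Expanding, this reduces to verifying $\frac{3x+n}{9x(x+n)} \le \frac{1}{3x}$, i.e., $3x+n \le 3(x+n)$, which is immediate.

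For part (ii), since $x-k \le x$ for $k \ge 0$, each factor satisfies $\frac{1}{3(x-k)} \ge \frac{1}{3x}$, so $\sum_{k=0}^{n-1}\frac{1}{3(x-k)} \ge \frac{n}{3x}$. Applying the Weierstrass product inequality $\prod_{k}(1+a_k) \ge 1+\sum_k a_k$ (valid for nonnegative $a_k$, and proven by an elementary induction) gives the result in one line.

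For part (iii), I would again induct on $n$, but start from the base case $n=2$. A direct computation shows $(1+\frac{1}{3x})(1+\frac{1}{3x-3}) = \frac{9x^2-3x-2}{9x^2-9x}$, and cross-multiplying against $\frac{3x}{3x-2}$ reduces the desired inequality to $4>0$. For the inductive step, using the hypothesis and multiplying by $1+\frac{1}{3(x-n)}$ it suffices to show
\[
\frac{3x}{3x-n}\cdot\frac{3x-3n+1}{3x-3n} > \frac{3x}{3x-n-1},
\]
which after clearing denominators becomes $(3x-n-1)(3x-3n+1) > (3x-n)(3x-3n)$; expansion yields the difference $2n-1$, positive for $n\ge 1$.

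No step here is a real obstacle—the only subtlety is remembering that part (iii) forces $n\ge 2$ because $n=1$ would give $(1+\frac{1}{3x}) > \frac{3x}{3x-1}$, which rearranges to $-1>0$ and fails; the base case $n=2$ is the point at which the inequality first turns favorable, so starting the induction there is essential.
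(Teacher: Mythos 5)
Your proof is correct, and for parts (i) and (iii) it is essentially the paper's own argument: the same induction for (i), reducing to $3x+n\leqslant 3(x+n)$, and for (iii) the same base case $n=2$ and the identical cross-multiplied inequality $(3x-3n+1)(3x-n-1)>(3x-n)(3x-3n)$ in the inductive step (the paper uses exactly this comparison, and your expansion showing the margin is $2n-1$ makes its validity explicit). The one genuine difference is part (ii): the paper only says the proof is ``similar to that of (i) and omitted,'' i.e.\ it gestures at another induction, whereas you get it in one line from $\frac{1}{3(x-k)}\geqslant\frac{1}{3x}$ together with the Weierstrass product inequality $\prod_k(1+a_k)\geqslant 1+\sum_k a_k$ for $a_k\geqslant 0$; this is shorter, and it also makes visible why the hypothesis $x\geqslant n$ is needed (it guarantees $x-k\geqslant 1$, so every factor is of the form $1+a_k$ with $a_k>0$). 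Your closing observation that (iii) genuinely fails at $n=1$ --- since $1+\frac{1}{3x}>\frac{3x}{3x-1}$ rearranges to $-1>0$ --- is a correct justification, absent from the paper, for why the induction must be anchored at $n=2$.
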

\begin{proof}
(i) The proof is by induction on $n$. For the base step, let
$n=1$ then $\prod\limits_{k=0}^{n-1} {(1+\cfrac{1}{3(x+k)})}
=1+\cfrac{1}{3x}=1+\cfrac{n}{3x}$.
For the induction step, assume that $\prod\limits_{k=0}^{n-1}
{(1+\cfrac{1}{3(x+k)})} \le 1+\cfrac{n}{3x}$.  Then
$\prod\limits_{k=0}^n {(1+\cfrac{1}{3(x+k)})} \le
(1+\cfrac{n}{3x})(1+\cfrac{1}{3(x+n)})=1+\cfrac{n}{3x}+\cfrac{1}{3(x+n)}+\cfrac{n}{9x(x+n)}\le 1+\cfrac{n+1}{3x}.$ Thus
the inequality holds for all $n\ge 1$.
The proof of (ii) is similar to that of (i) and omitted.

(iii) Let $n=2$.  Since $3x \cdot 3x-2\cdot 3x -3x +2>3x \cdot 3x -3\cdot 3x$ then
$\cfrac{3x -1}{3x \cdot 3(x -1)}>\cfrac{1}{3x -2}$. Thus $1+\cfrac{1}{3x }+\cfrac{1}{3(x
-1)}+\cfrac{1}{3x \cdot 3(x -1)}>\cfrac{3x -2+2}{3x
-2}=1+\cfrac{2}{3x -2}$. Hence  \\
$(1+\cfrac{1}{3x })(1+\cfrac{1}{3(x
-1)})>\cfrac{3x }{3x -2}$. Therefore  $\prod\limits_{k=0}^{n-1} {(1+\cfrac{1}{3(x
-k)})} >\cfrac{3x }{3x -n}$.

Assume that  $\prod\limits_{k=0}^{n-1} {(1+\cfrac{1}{3(x
-k)})} >\cfrac{3x }{3x -n}$.  Since  $ (3x -3n+1)(3x -n-1)>(3x -n)(3x -3n)$  then  $ \cfrac{3x (3x -3n)+3x }{(3x -n)(3x
-3n)}>\cfrac{3x }{3x -n-1}$. Thus $ \prod\limits_{k=0}^n {(1+\cfrac{1}{3(x -k)})} >\cfrac{3x
}{3x -n}(1+\cfrac{1}{3(x -n)})= \cfrac{3x }{3x -n}+\cfrac{3x }{(3x -n)(3x
-3n)}>\cfrac{3x }{3x -n-1}$.
\end{proof}

\begin{lemma} Let $2^{b_n }x_n -3^nx_0 =B_n $ such that $1\le x_0 <2^{b_n
}$, $1\le x_n <3^n$, $x_i \ne x_j $ for all $0\le i<j\le n-1$.     Then
(i) $\cfrac{B_n }{3^n}\le \cfrac{n}{3}$  if $x_k >x_0
$ for all  $1\le k\le n-1$;
(ii)  $\cfrac{B_n }{2^{b_n }}<\cfrac{n}{3}$ if $x_n <x_k
$ for all $0\le k\le n-1$;
(iii) $\cfrac{B_n }{2^{b_n }}>\cfrac{n}{3}$ if $x_n >x_i
$ for all $0\le i\le n-1$;
(iv)  $\cfrac{B_n }{3^n}\ge \cfrac{n}{3}$ if  $x_0 >x_k
$ for all $1\le k\le n$.
\end{lemma}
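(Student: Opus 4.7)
The plan is to unify all four parts through a single identity. Combining the defining relation $2^{b_n}x_n-3^nx_0=B_n$ with the Matthews--Watts product expansion $\tfrac{2^{b_n}x_n}{3^nx_0}=\prod_{k=0}^{n-1}(1+\tfrac{1}{3x_k})$ (see the proof of Lemma 4.1) gives, with $P:=\prod_{k=0}^{n-1}(1+\tfrac{1}{3x_k})$,
\[
\tfrac{B_n}{3^n}=x_0(P-1),\qquad \tfrac{B_n}{2^{b_n}}=x_n\bigl(1-\tfrac{1}{P}\bigr).
\]
The first rewriting is tailored to hypotheses on $x_0$ (parts (i) and (iv)), the second to hypotheses on $x_n$ (parts (ii) and (iii)). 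Since $x\mapsto 1+\tfrac{1}{3x}$ is strictly decreasing, any hypothesis making some $x_i$ extremal among the $n+1$ distinct positive integers $x_0,\dots,x_n$ lets one bound $P$ by a product over consecutive integers adjacent to that extremum, to which Lemma 4.8 applies.

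For part (i), the hypothesis forces $x_1,\dots,x_{n-1}$ to be $n-1$ distinct integers $>x_0$, so $P\le\prod_{k=0}^{n-1}(1+\tfrac{1}{3(x_0+k)})\le 1+\tfrac{n}{3x_0}$ by Lemma 4.8(i), giving $B_n/3^n\le n/3$. For part (iv), $x_0$ strictly exceeds $n$ distinct positive integers, so $x_0\ge n+1$; then Lemma 4.8(ii) yields $P\ge\prod_{k=0}^{n-1}(1+\tfrac{1}{3(x_0-k)})\ge 1+\tfrac{n}{3x_0}$, whence $B_n/3^n\ge n/3$. For part (ii), $x_n$ is the minimum, so Lemma 4.8(i) applied with $x_n+1$ in place of $x$ gives $P\le 1+\tfrac{n}{3(x_n+1)}$; a short algebraic manipulation of $x_n(1-1/P)$ yields the upper bound $\tfrac{nx_n}{3x_n+3+n}<n/3$. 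For part (iii) with $n\ge 2$, $x_n\ge n+1$ and Lemma 4.8(iii) applied with $x=x_n-1$ gives $P>\tfrac{3(x_n-1)}{3(x_n-1)-n}$, so $x_n(1-1/P)>\tfrac{nx_n}{3(x_n-1)}>n/3$.

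The only case falling outside Lemma 4.8(iii)'s range $n\ge 2$ is $n=1$ of part (iii); this is immediate, since $x_1>x_0$ forces $a_1=1$ and hence $B_1/2^{b_1}=1/2>1/3$. The main subtlety throughout is making sure the hypotheses of Lemma 4.8 ($x\ge n$ for (ii), $x\ge n\ge 2$ for (iii)) are in force --- this is where the distinctness of the $x_i$ and the careful counting of how many integers must sit on each side of the chosen extremum come in, and it is the bookkeeping step I would be most careful about.
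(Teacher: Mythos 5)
Your proposal is correct and follows essentially the same route as the paper's own proof: the same Matthews--Watts product identity $\cfrac{2^{b_n}x_n}{3^nx_0}=\prod_{k=0}^{n-1}\left(1+\cfrac{1}{3x_k}\right)$, the same comparisons via Lemma 4.8 in each of the four extremal cases (your shifted comparison points $x_n+1$ in (ii) and $x_n-1$ in (iii) merely tighten the paper's bounds, which compare against $x_n$ itself), and the identical $a_1=1$ argument for the $n=1$ case of (iii). One harmless overstatement: in (iv) your claim $x_0\geqslant n+1$ presumes $x_n$ is distinct from $x_1,\ldots,x_{n-1}$, which the hypothesis does not assert, but Lemma 4.8(ii) only needs $x_0\geqslant n$, which already follows from the $n-1$ distinct positive integers $x_1,\ldots,x_{n-1}$ lying strictly below $x_0$.
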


\begin{proof}
(i) From $\cfrac{2^{b_n }x_n }{3^nx_0 }=\prod\limits_{k=0}^{n-1}
{(1+\cfrac{1}{3x_k })} $, we have
\[
1+\cfrac{B_n }{3^nx_0 }=\prod\limits_{k=0}^{n-1} {(1+\cfrac{1}{3x_k })} \le
\prod\limits_{k=0}^{n-1} {(1+\cfrac{1}{3(x_0 +k)})} .
\]
Then $1+\cfrac{B_n }{3^nx_0 }\le 1+\cfrac{n}{3x_0 }$ by Lemma 4.8(i). Thus $\cfrac{B_n
}{3^n}\le \cfrac{n}{3}$.

(ii)  From $\cfrac{2^{b_n }x_n }{3^nx_0 }=\prod\limits_{k=0}^{n-1}
{(1+\cfrac{1}{3x_k })} $,  we have
\[
\cfrac{2^{b_n }x_n -B_n }{2^{b_n }x_n }=\prod\limits_{k=0}^{n-1}
{(1+\cfrac{1}{3x_k })^{-1}} \ge \prod\limits_{k=0}^{n-1} {(1+\cfrac{1}{3(x_n
+k)})^{-1}}.
\]
Then $1-\cfrac{B_n }{2^{b_n }x_n }\ge \prod\limits_{k=0}^{n-1}
{(1+\cfrac{1}{3(x_n +k)})^{-1}}\ge (1+\cfrac{n}{3x_n })^{-1}$ by Lemma 4.8(i). Thus $$\cfrac{B_n
}{2^{b_n }x_n }\le 1-(1+\cfrac{n}{3x_n })^{-1}=\cfrac{n}{3x_n +n}.$$
Hence $\cfrac{B_n }{2^{b_n }}\le \cfrac{nx_n }{3x_n +n}< \cfrac{n}{3}$.

(iii) Let $n=1$. Then $x_1 =\cfrac{3x+1}{2^{a_1 }}>x$.
Thus $(3-2^{a_1 })x+1>0$. Hence $a_1 =1$. Therefore $\cfrac{B_n }{2^{b_n
}}=\cfrac{B_1 }{2^{b_1 }}=\cfrac{1}{2}>\cfrac{1}{3}=\cfrac{n}{3}$.

Let $x_n \ge n\ge 2$. By  Lemma 4.8(iii),   we have  $\cfrac{2^{b_n }x_n }{3^nx_0
}=\prod\limits_{k=0}^{n-1} {(1+\cfrac{1}{3x_k })} \ge
\prod\limits_{k=0}^{n-1} {(1+\cfrac{1}{3(x_n -k)})} >\cfrac{3x_n }{3x_n -n}$. Then
 $\cfrac{2^{b_n }x_n }{2^{b_n }x_n -B_n }>\cfrac{3x_n }{3x_n -n}$. Thus
$\cfrac{2^{b_n }x_n -B_n }{2^{b_n }x_n }<\cfrac{3x_n -n}{3x_n }$.  Hence
$\cfrac{B_n }{2^{b_n }}>\cfrac{n}{3}.$

(iv) By  Lemma 4.8(ii),  we have
\[1+\cfrac{B_n }{3^nx_0 }=\cfrac{2^{b_n }x_n }{3^nx_0 }=\prod\limits_{k=0}^{n-1}
{(1+\cfrac{1}{3x_k })} \ge
\prod\limits_{k=0}^{n-1} {(1+\cfrac{1}{3(x_0 -k)})} \ge 1+\cfrac{n }{3 x_0 }.
\]
Then  $\cfrac{B_n
}{3^n}\ge \cfrac{n}{3}$.
\end{proof}

A direct consequence of Lemma 4.9  is the following theorem,  which may imply something unknown.

\begin{theorem} Let $2^{b_n }x_n -3^nx_0 =B_n $ such that $1\le x_0 <2^{b_n
}$, $1\le x_n <3^n$, $x_i \ne x_j $ for all $0\le i<j\le n-1$.
Then

(i) $\cfrac{B_n }{3^n}>\cfrac{n}{3}$ implies $x_k \le x_0 $ for some $1\le
k\le n-1$;

(ii) $\cfrac{B_n }{3^n}<\cfrac{n}{3}$ implies $x_0 \le x_k $ for some $1\le
k\le n$;

(iii) $\cfrac{B_n }{2^{b_n }}\le \cfrac{n}{3}$ implies $x_n \le x_i $ for some $0\le
i\le n-1$;

(iv) $\cfrac{B_n }{2^{b_n }}\ge \cfrac{n}{3}$ implies $x_n \ge x_k $ for some
$0\le k\le n-1$.
\end{theorem}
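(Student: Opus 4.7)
The plan is to observe that each of the four implications in Theorem 4.10 is simply the contrapositive of one of the four statements of Lemma 4.9, so the proof consists of pairing them up correctly and invoking Lemma 4.9 in each case. Throughout, the hypotheses $2^{b_n}x_n - 3^n x_0 = B_n$ with $1 \le x_0 < 2^{b_n}$, $1 \le x_n < 3^n$ and pairwise distinct $x_0,\dots,x_{n-1}$ are common to both results, so no translation of hypotheses is needed.

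First I would match Theorem 4.10(i) with Lemma 4.9(i): Lemma 4.9(i) asserts that if $x_k > x_0$ for all $1 \le k \le n-1$, then $B_n/3^n \le n/3$. Contrapositively, if $B_n/3^n > n/3$, then the condition ``$x_k > x_0$ for every $1 \le k \le n-1$'' must fail, which means $x_k \le x_0$ for some $1 \le k \le n-1$. Similarly, Theorem 4.10(iv) is the contrapositive of Lemma 4.9(ii): negating ``$x_n < x_k$ for all $0 \le k \le n-1$'' gives ``$x_n \ge x_k$ for some $0 \le k \le n-1$,'' and negating ``$B_n/2^{b_n} < n/3$'' gives ``$B_n/2^{b_n} \ge n/3$.''

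Next I would handle Theorem 4.10(iii) by contraposing Lemma 4.9(iii): that lemma says $x_n > x_i$ for all $0 \le i \le n-1$ forces $B_n/2^{b_n} > n/3$, so negating the conclusion to $B_n/2^{b_n} \le n/3$ forces the failure of the hypothesis, i.e.\ $x_n \le x_i$ for some $0 \le i \le n-1$. Finally, Theorem 4.10(ii) follows by contraposing Lemma 4.9(iv): the negation of ``$x_0 > x_k$ for all $1 \le k \le n$'' is ``$x_0 \le x_k$ for some $1 \le k \le n$,'' while the negation of ``$B_n/3^n \ge n/3$'' is ``$B_n/3^n < n/3$.''

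There is no genuine obstacle here: the only thing to be careful about is the asymmetry in the index ranges (some quantifiers run over $1 \le k \le n-1$, some over $0 \le k \le n-1$, and some over $1 \le k \le n$) and the distinction between strict and non-strict inequalities (Lemma 4.9(i) and (iv) are non-strict, while (ii) and (iii) are strict), so I would double-check that each contrapositive produces exactly the strict/non-strict form stated in Theorem 4.10 before concluding the proof.
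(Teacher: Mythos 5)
Your proposal is correct and matches the paper exactly: the paper states Theorem 4.10 as ``a direct consequence of Lemma 4.9,'' meaning precisely the four contrapositives you identify, with the pairings (i)$\leftrightarrow$4.9(i), (ii)$\leftrightarrow$4.9(iv), (iii)$\leftrightarrow$4.9(iii), (iv)$\leftrightarrow$4.9(ii). Your care with the index ranges and the strict/non-strict inequality flips is exactly the bookkeeping needed, and each contrapositive checks out.
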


\begin{theorem}
Let $(a_n )_{n\geqslant 1} $ be an E-sequence such that (i) $3^n>2^{b_n }$
for all $n\in \mathbb{N}$; (ii) There is a constant $c>\log_23 $ such that
there are infinitely many distinct pairs $(k,l)$ of positive integers such that $l>kc$, $a_{k+1} =\cdots =a_l =1$. Then
$\Omega -\lim a_n =\infty
$.
\end{theorem}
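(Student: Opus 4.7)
The plan is to assume for contradiction that $\Omega-\lim a_n=x_0$ for some positive integer $x_0$ and, along the infinitely many pairs $(k,l)=(k_i,l_i)$ supplied by hypothesis (ii), to squeeze the trajectory element $x_{k_i}$ between an exponential lower bound and a subexponential upper bound, forcing $x_0$ past every finite value. The key starting observation is purely arithmetic: when $a_{k+1}=\cdots=a_l=1$, iterating $x_{j+1}=(3x_j+1)/2$ collapses to the telescoping identity $x_l+1=(3/2)^{l-k}(x_k+1)$. Since $x_l$ is a positive integer, $2^{l-k}$ must divide $x_k+1$, and so $x_k\geq 2^{l-k}-1\geq 2^{k(c-1)}-1$ using $l>kc$.

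Next I dispose of the case that $\{k_i\}$ is bounded: passing to a subsequence, $k_i$ equals some constant $k$ while $l_i\to\infty$, and $a_{k+1}=\cdots=a_{l_i}=1$ for arbitrarily large $l_i$ forces $a_n=1$ for every $n>k$. The E-sequence is then $a_1\cdots a_k\overline{1}$, and Theorem 3.4 (applied with $r=s=1$, hence $3^r>2^s$) gives $\Omega$-divergence, contradicting $\Omega-\lim a_n=x_0$. So I may assume $k_i\to\infty$, whence also $l_i-k_i>k_i(c-1)\to\infty$.

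To apply Lemma 4.5 I need the trajectory to be unbounded. If $(x_n)$ were bounded it would be eventually periodic, hence so would $(a_n)$; a period of length $\geq 2$ would cap the number of consecutive $1$'s appearing in $(a_n)$ by a constant, contradicting $l_i-k_i\to\infty$, while a period of length $1$ reverts to the case just handled. Therefore $(x_n)$ is unbounded, so the $x_n$ are pairwise distinct odd positive integers coprime to $3$, and Lemma 4.5 yields $\prod_{j=0}^{k_i-1}\bigl(1+\tfrac{1}{3x_j}\bigr)<1.5\,k_i^{1/9}$. Matthews and Watts's formula together with $b_{k_i}\geq k_i$ then gives
\[
x_{k_i}\;=\;\frac{3^{k_i}x_0}{2^{b_{k_i}}}\prod_{j=0}^{k_i-1}\Bigl(1+\tfrac{1}{3x_j}\Bigr)\;\leq\;\Bigl(\tfrac{3}{2}\Bigr)^{k_i}x_0\cdot 1.5\,k_i^{1/9}.
\]

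Combining this upper bound with the divisibility lower bound from the first paragraph and using $(3/2)^{k_i}=2^{k_i(\log_2 3-1)}$ yields
\[
x_0\;\geq\;\frac{2^{k_i(c-1)}-1}{1.5\,k_i^{1/9}\,(3/2)^{k_i}}\;=\;\frac{2^{k_i(c-\log_2 3)}}{1.5\,k_i^{1/9}}\bigl(1-o(1)\bigr),
\]
which tends to infinity as $k_i\to\infty$ because $c>\log_2 3$, contradicting that $x_0$ is a fixed positive integer. The main obstacle is the step forcing $(x_n)$ to be unbounded so that Lemma 4.5 is applicable; this is what necessitates the case split on $\{k_i\}$. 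Once distinctness of the $x_j$ is secured, the strict inequality $c>\log_2 3$ is exactly what lets the divisibility-driven exponential $2^{k_i(c-\log_2 3)}$ dominate the slow Wendel factor $k_i^{1/9}$.
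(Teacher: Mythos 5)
Your proof is correct, but it takes a genuinely different route from the paper's. The paper argues unconditionally with the rational approximants of Proposition 2.8: with $x_l^{1,l}=\bigl(3^l x_0^{1,l}+B_1^{l-1}\bigr)/2^{b_l}$, the run $a_{k+1}=\cdots=a_l=1$ gives $B_{k+1}^{l-1}=3^{l-k}-2^{l-k}$ (Proposition 3.2), hence $2^{l-k}(x_l^{1,l}+1)=3^{l-k}(x_k^{1,l}+1)$ and $x_k^{1,l}=2^{l-k}w-1$ with $w\geqslant 1$; hypothesis (i) then enters through $B_n<3^n n$ to give the lower bound $x_0^{1,l}\geqslant (2^c/3)^k\,2^{b_k-k}-1-k$, and the monotone divergence of $x_0^{1,n}$ (Proposition 2.8(iii),(v)) concludes --- no Matthews--Watts formula, no Wendel inequality, and no contradiction hypothesis. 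You instead assume $\Omega$-convergence and run the same $2$-adic telescoping trick on the actual trajectory ($x_l+1=(3/2)^{l-k}(x_k+1)$, so $2^{l-k}\mid x_k+1$ and $x_{k_i}\geqslant 2^{k_i(c-1)}-1$), then cap $x_{k_i}$ from above by importing the Section 4 analytic machinery (Matthews--Watts plus Lemma 4.5), after securing distinctness of the $x_j$ via your two case splits (bounded $k_i$ handled by Theorem 3.4; bounded trajectory handled by eventual periodicity). Your route buys something real: you never use hypothesis (i) --- the only input is $b_{k_i}\geqslant k_i$, which is automatic --- so you prove the theorem under hypothesis (ii) alone; the cost is heavier tools and an indirect structure, where the paper obtains effective, monotone divergence of the approximants $x_0^{1,n}$ directly. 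Two slips to repair, both cosmetic: first, $x_0$ itself need not be coprime to $3$ (only $x_j$ for $j\geqslant 1$ are), so before invoking Lemma 4.5 bound the $j=0$ factor by $1+\frac{1}{3x_0}\leqslant \frac{4}{3}$ and apply the lemma to the remaining $k_i-1$ distinct terms --- or simply use the cruder distinctness bound from the proof of Lemma 4.1, $\prod_{k=1}^{n}\bigl(1+\frac{1}{3k}\bigr)=O\bigl(n^{1/3}\bigr)$, since any polynomial factor is crushed by $2^{k_i(c-\log_2 3)}$; second, in the bounded-trajectory case you should say \emph{minimal} period, since an all-ones repeating block of length $\geqslant 2$ does not cap the runs of ones but instead reduces to the period-one case you already handled via Theorem 3.4.
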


\begin{proof}It follows from (i) that $B_{n} < 3^nn$ for all
$n\in \mathbb{N}$ by induction on $n$.  $B_{k+1}^{l-1}
=3^{l-k}-2^{l-k}$ by Proposition 3.2.

Let $x_l^{1,l} =\cfrac{3^lx_0^{1,l} +B_1^{l-1} }{2^{b_l }}$, $1\leqslant
x_0^{1,l} <2^{b_l }$,$1\leqslant x_l^{1,l} <3^l$. Then $x_k^{1,l}
=\cfrac{3^kx_0^{1,l} +B_1^{k-1} }{2^{b_k }}$, $x_l^{1,l}
=\cfrac{3^{l-k}x_k^{1,l} +B_{k+1}^{l-1} }{2^{b_{k+1}^l }}$ by Proposition 2.8(ii). By $B_{k+1}^{l-1}
=3^{l-k}-2^{l-k}$, $2^{b_{k+1}^l }=2^{l-k}$, \\
we have
$2^{l-k}(x_l^{1,l}
+1)=3^{l-k}(x_k^{1,l} +1)$. Thus $x_k^{1,l} =2^{l-k}w-1$ for some
$1\leqslant w$.
Hence $x_k^{1,l} =\cfrac{3^kx_0^{1,l} +B_1^{k-1} }{2^{b_k
}}=2^{l-k}w-1$.  Therefore \\
$x_0^{1,l} =\cfrac{2^{l-k}2^{b_k }w-2^{b_k
}-B_1^{k-1} }{3^k}\geqslant \cfrac{2^l}{3^k}2^{b_k -k}-1-k\geqslant
(\cfrac{2^c}{3})^k2^{b_k -k}-1-k$.
If there are only finitely many distinct $k$ in all pairs $(k,l)$,
$x_0^{1,l} \geqslant \cfrac{2^l}{3^k}2^{b_k -k}-1-k\to \infty
$, as $ l\to \infty $; otherwise  $x_0^{1,l} \geqslant (\cfrac{2^c}{3})^k2^{b_k -k}-1-k\to \infty$, as $ k\to \infty $.
Then $\Omega
-\lim a_n =\infty $.
\end{proof}

\begin{corollary}
Let $(a_n )_{n\geqslant 1} $be the E-sequence $12121112\cdots $, where $a_n
=2$ if ${\kern 1pt}{\kern 1pt}n\in \{2^1,2^2,2^3,\cdots \}$ and $a_n =1$
otherwise. Then $\Omega -\lim a_n =\infty $.
\end{corollary}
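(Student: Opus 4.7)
The plan is to deduce the corollary from Theorem 4.11 by verifying its two hypotheses for this explicit E-sequence. First I would compute $b_n$: since $a_i = 2$ exactly when $i$ is one of $2^1, 2^2, \ldots$ and $a_i = 1$ otherwise, we have
\[
b_n = n + \bigl|\{k \geq 1 : 2^k \leq n\}\bigr| = n + \lfloor \log_2 n \rfloor.
\]
Hence $2^{b_n} = 2^n \cdot 2^{\lfloor \log_2 n\rfloor} \leq n \cdot 2^n$, so hypothesis (i) of Theorem 4.11 reduces to the elementary inequality $(3/2)^n > n$ for every $n \geq 1$. I would verify this by induction: the cases $n = 1, 2$ are checked directly ($1.5 > 1$ and $2.25 > 2$), and for the step $(3/2)^{n+1} = (3/2)(3/2)^n > (3/2)n \geq n+1$ whenever $n \geq 2$.

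For hypothesis (ii), I would exhibit the family of pairs $(k, l) = (2^m, 2^{m+1} - 1)$ for each integer $m \geq 1$. Since no integer strictly between $2^m$ and $2^{m+1}$ is a power of $2$, every entry $a_{k+1}, a_{k+2}, \ldots, a_l$ equals $1$. Moreover
\[
\frac{l}{k} = \frac{2^{m+1} - 1}{2^m} = 2 - 2^{-m} \longrightarrow 2,
\]
so any constant $c$ with $\log_2 3 < c < 2$ (for instance $c = 1.9$) satisfies $l > ck$ for all sufficiently large $m$, yielding infinitely many admissible pairs.

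With both hypotheses in hand, Theorem 4.11 applies and gives $\Omega - \lim a_n = \infty$. I do not anticipate a genuine obstacle here; the argument is essentially a bookkeeping check, and the only mildly nontrivial ingredient is the inductive inequality $(3/2)^n > n$, which falls out in one line.
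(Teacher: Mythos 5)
Your proof is correct and takes essentially the same route as the paper: both deduce the corollary from Theorem 4.11 using the pairs $(k,l)=(2^m,2^{m+1}-1)$ and a constant $c$ strictly between $\log_2 3$ and $2$ (the paper chooses $c=\frac{7}{4}$, valid for $m\geqslant 3$, where you choose $c=1.9$). Your explicit check of hypothesis (i) via $b_n=n+\lfloor \log_2 n\rfloor$ and the inequality $(3/2)^n>n$ is sound and in fact supplies a verification the paper's one-line proof leaves implicit.
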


\begin{proof}Take $c=\cfrac{7}{4}>\log_23 $, $k=2^m$ and $l=2^{m+1}-1$.
Then $a_{k+1} =\cdots =a_l =1$, $l>kc$ for all
${\kern 1pt}m\geqslant 3.$ Thus $\Omega -\lim a_n =\infty $ by Theorem 4.11.
\end{proof}
\begin{theorem}
Let $(a_n )_{n\geqslant 1} $ be an E-sequence such that (i) $3^n> 2^{b_n }$
for all $n\in \mathbb{N}$; (ii) there is a constant $c>\log_23 $ such that
there are infinitely many distinct pairs $(r,l)$ of positive integers such that $l>r$, $b_{l+r} >lc$, $a_{l+k} =a_k $ for all $1\leqslant k\leqslant r$, i.e., $(a_1 \cdots a_r )a_{r+1}\cdots a_l (a_{l+1} \cdots a_{l+r} )$ is contained in $ (a_n )_{n\geqslant 1} $. Then
$\Omega -\lim a_n =\infty
$.
\end{theorem}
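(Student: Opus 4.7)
The plan is to mimic the strategy of Theorem 4.11, but replace the block of 1's with the repeating block $(a_{l+1}\cdots a_{l+r}) = (a_1\cdots a_r)$, thereby producing a divisibility condition that forces $x_0^{1,l+r}$ to be large.

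First I would translate the repeating-block hypothesis into the identities $b_{l+1}^{l+r}=b_r$ and $B_{l+1}^{l+r-1}=B_r$, which follow immediately from $a_{l+k}=a_k$ for $1\leqslant k\leqslant r$. Writing $n=l+r$ and $y_i=x_i^{1,n}$ for brevity, Proposition 2.8(ii) then yields the twin formulas
\[
y_r=\frac{3^r y_0+B_r}{2^{b_r}},\qquad y_{l+r}=\frac{3^r y_l+B_r}{2^{b_r}}.
\]
Subtracting gives $2^{b_r}(y_{l+r}-y_r)=3^r(y_l-y_0)$. Since $\gcd(2^{b_r},3^r)=1$ and $y_l-y_0$ is a positive integer (positive because $3^l>2^{b_l}$ forces $y_l=(3^l y_0+B_l)/2^{b_l}>y_0$), we conclude $2^{b_r}\mid y_l-y_0$, hence $y_l-y_0\geqslant 2^{b_r}$.

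Substituting $y_l=(3^l y_0+B_l)/2^{b_l}$ and using $b_l+b_r=b_{l+r}$ rearranges to $(3^l-2^{b_l})y_0+B_l\geqslant 2^{b_{l+r}}$, so
\[
y_0\geqslant \frac{2^{b_{l+r}}-B_l}{3^l-2^{b_l}}>\frac{2^{b_{l+r}}}{3^l}-\frac{B_l}{3^l}.
\]
Condition (i) gives $2^{b_i}<3^i$ for every $i$, so $B_l=\sum_{i=0}^{l-1}3^{l-1-i}2^{b_i}<l\cdot 3^{l-1}$, hence $B_l/3^l<l/3$. Combining with condition (ii), namely $b_{l+r}>lc$ with $c>\log_2 3$, gives
\[
y_0>\Bigl(\tfrac{2^c}{3}\Bigr)^{\!l}-\tfrac{l}{3}.
\]
Since $2^c/3>1$, this lower bound diverges as $l\to\infty$.

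Finally I would observe that the hypothesis produces infinitely many distinct pairs $(r,l)$, and since $1\leqslant r<l$ forces at most $l-1$ admissible $r$ for each fixed $l$, the $l$-values are unbounded. Along a subsequence with $l_k\to\infty$ we therefore get $x_0^{1,l_k+r_k}\to\infty$; since $(x_0^{1,n})_{n\geqslant 1}$ is non-decreasing by Proposition 2.8(iii), the whole sequence $x_0^{1,n}$ tends to infinity, so $\Omega-\lim a_n=\infty$ by Proposition 2.8(v). The only delicate point is the divisibility step $2^{b_r}\mid y_l-y_0$: it is what replaces the closed-form $B_{k+1}^{l-1}=3^{l-k}-2^{l-k}$ used in Theorem 4.11, and everything downstream is a routine bound.
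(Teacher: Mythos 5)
Your proof is correct, and while it shares the paper's overall architecture --- decompose $x^{1,l+r}$ at positions $l$ and $l+r$ via Proposition 2.8(ii), exploit the block identities $b_{l+1}^{l+r}=b_r$ and $B_{l+1}^{l+r-1}=B_r$, get $y_l>y_0$ from condition (i), and finish with a lower bound of shape $(2^c/3)^l-O(l)$ --- your derivation of the central divisibility is genuinely different. The paper introduces the auxiliary solution $x_0^{1,r}$ attached to the length-$r$ truncation $(a_1\cdots a_r)$, observes $x_l^{1,l+r}\equiv x_0^{1,r}\pmod{2^{b_r}}$, and must invoke the monotonicity $x_0^{1,l+r}\geqslant x_0^{1,r}$ of Proposition 2.8(iii) in mid-proof to force $x_l^{1,l+r}=2^{b_r}u+x_0^{1,r}$ with $u\geqslant 1$, from which it bounds $x_0^{1,l+r}\geqslant 2^{b_{l+r}}/3^l-l$. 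You instead subtract the twin identities $2^{b_r}y_r=3^ry_0+B_r$ and $2^{b_r}y_{l+r}=3^ry_l+B_r$ to obtain the exact relation $2^{b_r}(y_{l+r}-y_r)=3^r(y_l-y_0)$, and coprimality gives $2^{b_r}\mid y_l-y_0$ directly; this eliminates both the auxiliary sequence and the mid-proof appeal to Proposition 2.8(iii), and keeping $y_0$ on the left yields the slightly sharper inequality $(3^l-2^{b_l})y_0\geqslant 2^{b_{l+r}}-B_l$. Your closing remarks also make rigorous two points the paper leaves implicit: the counting argument that the $l$-values among the infinitely many pairs $(r,l)$ are unbounded, and the explicit passage to $\Omega$-divergence through the monotonicity of $(x_0^{1,n})_{n\geqslant 1}$ and Proposition 2.8(v). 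One pedantic patch is needed: the strict inequality $\frac{2^{b_{l+r}}-B_l}{3^l-2^{b_l}}>\frac{2^{b_{l+r}}}{3^l}-\frac{B_l}{3^l}$ presupposes $2^{b_{l+r}}>B_l$; if instead $2^{b_{l+r}}\leqslant B_l$, then combining $2^{b_{l+r}}>2^{lc}$ with $B_l\leqslant l\,3^{l-1}$ gives $(2^c/3)^l<l/3$, so your displayed bound $y_0>(2^c/3)^l-l/3$ holds trivially from $y_0\geqslant 1$; since $(2^c/3)^l$ eventually dominates $l/3$, this degenerate case occurs for at most finitely many $l$ and the divergence conclusion is unaffected.
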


\begin{proof} Let $x_{l+r}^{1,l+r} =\cfrac{3^{l+r}x_0^{1,l+r} +B_1^{l+r-1}
}{2^{b_1^{l+r} }}$, $1\leqslant x_0^{1,l+r} <2^{b_1^{l+r} }$, $1\leqslant
x_{l+r}^{1,l+r} <3^{l+r}$. Then $x_l^{1,l+r} =\cfrac{3^lx_0^{1,l+r}
+B_1^{l-1} }{2^{b_1^l }}$, $x_{l+r}^{1,l+r} =\cfrac{3^rx_l^{1,l+r}
+B_{l+1}^{l+r-1} }{2^{b_{l+1}^{l+r} }}=\cfrac{3^rx_l^{1,l+r} +B_1^{r-1}
}{2^{b_1^r }}$ by Proposition 2.8(ii). By $3^l>2^{b_1^l }$, we have $x_l^{1,l+r} >x_0^{1,l+r}$.

Let $x_r^{1,r} =\cfrac{3^rx_0^{1,r} +B_1^{r-1} }{2^{b_1^r }}$, $1\leqslant
x_0^{1,r} <2^{b_1^r }$, $1\leqslant x_r^{1,r} <3^r$. Then \\
$x_0^{1,r} \equiv
x_l^{1,l+r} (\bmod {\kern 1pt}{\kern 1pt}{\kern 1pt}{\kern 1pt}2^{b_1^r })$.
By Proposition 2.8(iii),  we have $x_0^{1,l+r} \geqslant x_0^{1,r} $. \\
Let $x_l^{1,l+r} =2^{b_1^r }u+x_0^{1,r} $. Then $u\geqslant 1$ by
$x_l^{1,l+r} >x_0^{1,l+r} \geqslant x_0^{1,r} $. Thus
$$x_0^{1,l+r} =\cfrac{2^{b_1^l }2^{b_1^r }u+2^{b_1^l }x_0^{1,r} -B_1^{l-1}
}{3^l}\geqslant \cfrac{2^{b_1^{l+r} }}{3^l}-l\geqslant
(\cfrac{2^c}{3})^l-l\to \infty, \,\,as\,\, l\to \infty.$$
Hence  $\Omega-\lim a_n =\infty $.
\end{proof}

\begin{theorem}
Let $1\leqslant \theta <\log_23 $  and define $a_n =[n\theta ]-[(n-1)\theta
]$. Then \\
$\Omega -\lim a_n =\infty $.
\end{theorem}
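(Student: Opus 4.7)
The plan is to dispatch the rational case using Theorem~3.4 and the irrational case using Theorem~4.13. If $\theta = p/q$ with $\gcd(p,q)=1$, then $a_{n+q}=a_n$, so $(a_n)_{n\geq 1}$ is purely periodic of period $r=q$ with $s=\sum_{k=1}^{q}a_k = p$. Since $\theta<\log_2 3$ we have $2^s = 2^p < 2^{q\log_2 3} = 3^q = 3^r$, and Theorem~3.4 immediately yields $\Omega$-divergence.

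For irrational $\theta$ I would apply Theorem~4.13 with any fixed $c \in (\log_2 3,\, 2\theta)$, an interval that is nonempty because $\theta \geq 1 > \tfrac12 \log_2 3$. Hypothesis~(i) is immediate: $b_n = [n\theta] < n\theta < n\log_2 3$ forces $2^{b_n} < 3^n$. For hypothesis~(ii), let $p_m/q_m$ denote the convergents of $\theta$ and, for each sufficiently large odd $m$, take $l = q_{m+1}$ and $r = q_{m+1}-1$. Then $l > r$ trivially, and $b_{l+r} = [(2q_{m+1}-1)\theta] > (2q_{m+1}-1)\theta - 1$ gives $b_{l+r}/l \to 2\theta > c$, so $b_{l+r} > lc$ for all large $m$.

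The main obstacle is the block equality $a_{l+k} = a_k$ for $1 \leq k \leq r$. Telescoping the defining relation shows this is equivalent to $[(l+k)\theta] = [l\theta] + [k\theta]$ for $0 \leq k \leq r$, i.e.\ to $\{l\theta\} + \{k\theta\} < 1$ for every such $k$. I would then invoke the classical best-approximation property of convergents: since $m$ is odd, $q_m\theta - p_m < 0$, so $\{q_m\theta\} = 1 - \|q_m\theta\|$ lies in $(1/2,\,1)$; on the other hand, for every $k$ with $1 \leq k < q_{m+1}$ and $k \neq q_m$ one has $\|k\theta\| > \|q_m\theta\|$, and a brief case split according as $\{k\theta\} \leq 1/2$ or $\{k\theta\} > 1/2$ yields $\{k\theta\} < \{q_m\theta\}$ in both cases. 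Hence $\max_{1 \leq k \leq r} \{k\theta\} = \{q_m\theta\}$ and $1 - \max_{1 \leq k \leq r} \{k\theta\} = \|q_m\theta\|$. Because $m+1$ is even, $\{l\theta\} = \|q_{m+1}\theta\| < \|q_m\theta\|$ by the same best-approximation property, so $\{l\theta\} + \{k\theta\} < 1$ for all $1 \leq k \leq r$, establishing the block repetition. With both hypotheses of Theorem~4.13 verified, the conclusion $\Omega-\lim a_n = \infty$ follows.
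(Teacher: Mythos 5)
Your proposal is correct, but in the irrational case it takes a genuinely different route from the paper. The rational case coincides with the paper's (pure periodicity plus Theorem 3.4, with $2^{s}<3^{r}$ from $\theta<\log_2 3$). For irrational $\theta$, however, the paper never invokes Theorem 4.13: it uses Hurwitz's theorem to select convergents $\frac{s}{r}$ with $\vert\theta-\frac{s}{r}\vert<\frac{1}{\sqrt 5\, r^2}$, proves $[n\theta]=[\frac{s}{r}n]$ on the whole window $1\leqslant n\leqslant[\sqrt 5\, r]$ (with a two-case split on the sign of $\theta-\frac{s}{r}$, the exceptional positions $n\in\{r,2r\}$ being handled by $[r\theta]=s-1$, $[2r\theta]=2s-1$), so that $(a_n)$ agrees with a periodic sequence of period $r$ for roughly $2.2$ periods; it then applies the periodic machinery directly --- Proposition 3.3(ii) with $k=2$ --- together with explicit geometric-series bounds such as $B_r\leqslant\frac{3^r}{3-2^{\theta}}$ to force $x_0^{1,2r}\to\infty$ (respectively $x_0^{1,2r+1}\to\infty$ in the second case, after shifting by $a_1$). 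You instead reduce to the already-proved Theorem 4.13, taking the repeated block to be essentially the entire prefix: $l=q_{m+1}$, $r=q_{m+1}-1$ with $m$ odd. Your key reduction of $a_{l+k}=a_k$ to $\{l\theta\}+\{k\theta\}<1$ is right, and your use of the best-approximation property is sound: for large odd $m$ one has $\Vert q_m\theta\Vert<\tfrac12$, so $\{q_m\theta\}=1-\Vert q_m\theta\Vert>\tfrac12$, your case split gives $\max_{1\leqslant k\leqslant r}\{k\theta\}=\{q_m\theta\}$, and the even index $m+1$ gives $\{l\theta\}=\Vert q_{m+1}\theta\Vert<\Vert q_m\theta\Vert$, whence the sum is below $1$ even at $k=q_m$. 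Hypothesis (i) follows from $b_n=[n\theta]<n\log_2 3$, and $b_{l+r}/l\to 2\theta$ makes $c\in(\log_2 3,\,2\theta)$ work, the interval being nonempty exactly because $\theta\geqslant 1$ (note the paper's argument also uses $\theta\geqslant 1$ implicitly, since its lower bound grows like $(\frac{4^{\theta}}{3})^r$). What your route buys: it replaces Hurwitz and the explicit $B_r$, $B_2^r$ estimates with purely qualitative continued-fraction facts, eliminates the sign-of-error case analysis (the parity of $m$ does that work), and reuses the growth estimate already encapsulated in Theorem 4.13; the paper's route is more self-contained within its own toolkit and produces explicit lower bounds for $x_0^{1,2r}$, but is computationally heavier.
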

\begin{proof}
 If $\theta $ is a rational number then $(a_n )_{n\geqslant
1} $ is purely periodic and the result follows from Theorem 3.4. Let $\theta
$ be an irrational number in the following. By Hurwitz theorem there are
infinite convergents $\cfrac{s}{r}$ of $\theta $ such that $\vert \theta
-\cfrac{s}{r}\vert <\cfrac{1}{\sqrt 5 r^2}$. There are two cases to be
considered.

\textbf{Case 1} There are infinite convergents $\cfrac{s}{r}$ of $\theta $
such that $0<\theta -\cfrac{s}{r}<\cfrac{1}{\sqrt 5 r^2}$. We prove that
$[\theta n]=[\cfrac{s}{r}n]$ for all  $1\leqslant n\leqslant [\sqrt 5 r]$. By
$1\leqslant n\leqslant \left[ {\sqrt 5 r} \right]$, we have $0<\theta
n-\cfrac{s}{r}n<\cfrac{n}{\sqrt 5 r^2}<\cfrac{\sqrt 5 r}{\sqrt 5
r^2}=\cfrac{1}{r}.
$ Then
$0\leqslant \{\cfrac{s}{r}n\}<\theta
n-[\cfrac{s}{r}n]<\cfrac{1}{r}+\{\cfrac{s}{r}n\}\leqslant 1.$
Thus $0<\theta n-[\cfrac{s}{r}n]<1.$
Hence $[\theta n]=[\cfrac{s}{r}n]$. Then we have the
following periodic table for $(a_n )_{1\leqslant n\leqslant \left[ {\sqrt 5
r} \right]} $.

\begin{table}[htbp]
\begin{center}
\begin{tabular}{|p{34pt}|p{35pt}|p{24pt}|p{50pt}|p{24pt}|p{26pt}|}
\hline
$a_1 $&
$a_2 $&
$\cdots $&
$a_{\left[ {\sqrt 5 r-2r} \right]} $&
$\cdots $&
$a_r $ \\
\hline
$a_{r+1} $&
$a_{2+r} $&
$\cdots $&
$a_{\left[ {\sqrt 5 r-r} \right]} $ &
$\cdots $&
$a_{2r} $  \\
\hline
$a_{2r+1} $&
$a_{2+2r} $&
$\cdots $&
$a_{\left[ {\sqrt 5 r} \right]} $&
&
 \\
\hline
\end{tabular}
\label{tab1}
\end{center}
\end{table}

By Proposition 3.3(ii),
$x_0^{1,2r} =\cfrac{2^{2[r\theta ]}u_{2r} -B_r
}{3^r-2^{[r\theta ]}}
$
for some $u_{2r} \geqslant 1$. \\
By
 $B_r =\sum\limits_{i=0}^{r-1} {3^{r-1-i}2^{b_i }}
=3^{r-1}\sum\limits_{i=0}^{r-1} {\cfrac{2^{b_i }}{3^i}} \leqslant
3^{r-1}\sum\limits_{i=0}^{r-1} {\cfrac{2^{[i\theta ]}}{3^i}} \leqslant
3^{r-1}\sum\limits_{i=0}^{r-1} {\cfrac{2^{i\theta }}{3^i}}
=\cfrac{3^r}{3}\cfrac{1-(\cfrac{2^\theta }{3})^r}{1-\cfrac{2^\theta
}{3}}=\cfrac{3^r-2^{r\theta }}{3-2^\theta }\leqslant \cfrac{3^r}{3-2^\theta
}$,  we have
\[x_0^{1,2r} \geqslant \cfrac{2^{2[r\theta ]}-B_r }{3^r-2^{[r\theta
]}}\geqslant \cfrac{4^{r\theta -1}-\cfrac{3^r}{3-2^\theta }}{3^r-2^{r\theta -1}}=
\cfrac{{\cfrac{1}{4}(\cfrac{{4^\theta  }}{3})^r  - {\textstyle{1 \over {3 - 2^\theta  }}}}}{{1 - \cfrac{1}{2}(\cfrac{{2^\theta  }}{3})^r }} .\]
Thus
$x_0^{1,2r} \to \infty $, as $r\to \infty $. Hence $\Omega
-\lim a_n =\infty $ .

\textbf{Case 2 }There are infinite convergents $\cfrac{s}{r}$ of $\theta $
such that $0<\cfrac{s}{r}-\theta <\cfrac{1}{\sqrt 5 r^2}$.

Firstly, we prove $[\theta n]=[\cfrac{s}{r}n]$ for all $1\leqslant n\leqslant
[\sqrt 5 r]$, $n\notin \{r,2r\}$. By $0<\cfrac{s}{r}-\theta <\cfrac{1}{\sqrt 5
r^2}$, we have $\cfrac{s}{r}-\cfrac{1}{\sqrt 5 r^2}<\theta <\cfrac{s}{r}$. Then
$\cfrac{s}{r}n-[\cfrac{s}{r}n]-\cfrac{n}{\sqrt 5 r^2}<\theta
n-[\cfrac{s}{r}n]<\cfrac{s}{r}n-[\cfrac{s}{r}n]<1$. By $1\leqslant n\leqslant
[\sqrt 5 r]$, $n\notin \{r,2r\}$, we have $ 0<\cfrac{1}{r}-\cfrac{n}{\sqrt 5
r^2}\leqslant \cfrac{s}{r}n-[\cfrac{s}{r}n]-\cfrac{n}{\sqrt 5 r^2}$. Then $
0<\theta n-[\cfrac{s}{r}n]<1$. Thus $[\theta n]=[\cfrac{s}{r}n]$.

Secondly, we prove $[r\theta ]=s-1$, $[2r\theta ]=2s-1$. By $1\leqslant n$,
$0<\cfrac{s}{r}-\theta <\cfrac{1}{\sqrt 5 r^2}$, we have $-\cfrac{n}{\sqrt 5
r^2}+\cfrac{s}{r}n<n\theta <\cfrac{s}{r}n$. By $n<\sqrt 5 r$, we have $
-1<-\cfrac{1}{r}<-\cfrac{n}{\sqrt 5 r^2}$. Then
$-1+\cfrac{s}{r}n<-\cfrac{n}{\sqrt 5 r^2}+\cfrac{s}{r}n<n\theta <\cfrac{s}{r}n$.
By taking $n=r,2r$, we have $[r\theta ]=s-1$, $[2r\theta ]=2s-1$.

Let $2\leqslant j\leqslant r-1$ then $r+2\leqslant r+j\leqslant 2r-1$ and
$r+1\leqslant r+j-1\leqslant 2r-2$. Thus $a_{r+j} =[\theta (r+j)]-[\theta
(r+j-1)]=[\cfrac{s}{r}(r+j)]-[\cfrac{s}{r}(r+j-1)]=[s+\cfrac{s}{r}j]-[s+\cfrac{s}{r}(j-1)]=[\cfrac{s}{r}j]-[\cfrac{s}{r}(j-1)]=a_j
$.

Let $2\leqslant j\leqslant [\sqrt 5 r]-2r$. Then $2r+2\leqslant
2r+j\leqslant [\sqrt 5 r]$ and $2r+1\leqslant 2r+j-1\leqslant [\sqrt 5
r]-1$. Thus $a_{2r+j} =[\theta (2r+j)]-[\theta
(2r+j-1)]=[\cfrac{s}{r}(2r+j)]-[\cfrac{s}{r}(2r+j-1)]=[\cfrac{s}{r}j]-[\cfrac{s}{r}(j-1)]=a_j
$.

By easy calculation, we have $a_r=a_{2r}=1$, $a_{r+1} =a_{2r+1}
=2$.

Then we have the following periodic table for $(a_n )_{1\leqslant n\leqslant
\left[ {\sqrt 5 r} \right]} $.
\begin{table}[htbp]
\begin{center}
\begin{tabular}
{|c|c|c|c|c|c|c|c|}
\hline
$a_1 $&$a_2 $&$a_3 $&$\cdots $&$a_{\left[ {\sqrt 5 r} \right]-2r} $&$\cdots $&$a_r $&$a_{r+1} $ \\
\hline
&$a_{2+r} $&$a_{3+r} $&$\cdots $&$a_{\left[ {\sqrt 5 r} \right]-r} $ &
$\cdots $&$a_{2r} $&$a_{2r+1} $  \\
\hline
&$a_{2+2r} $&$a_{3+2r} $&$\cdots $&$a_{\left[ {\sqrt 5 r} \right]} $&
&&
 \\
\hline
\end{tabular}
\label{tab2}
\end{center}
\end{table}
Since $\theta <\log_23 $, we then take all convergents $\cfrac{s}{r}$ of
$\theta $ such that $\cfrac{s}{r}<\log_23 $ and thus $2^s<3^r$. By $a_1
=1$, $b_2^{r+1} =[r\theta ]+1=s$ and Proposition 3.3(ii), we have
\[
x_0^{1,2r+1} = \cfrac{2^{2s+1}u_{2r+1} -(3^r-2^s)-2B_2^r }{3(3^r-2^s)}
\]
for some $u_{2r+1} \geqslant 1$ . By $B_2^r =\sum\limits_{i=0}^{r-1}
{3^{r-1-i}2^{b_2^{i+1} }} =3^{r-1}\sum\limits_{i=0}^{r-1}
{\cfrac{2^{[i\theta +\theta ]-1}}{3^i}} \leqslant 3^{r-1}2^{\theta
-1}\sum\limits_{i=0}^{r-1} {\cfrac{2^{i\theta }}{3^i}} =2^{\theta
-1}\cfrac{3^r}{3}\cfrac{1-(\cfrac{2^\theta }{3})^r}{1-\cfrac{2^\theta
}{3}}=2^{\theta -1}\cfrac{3^r-2^{r\theta }}{3-2^\theta }\leqslant C3^r$,
where $C=\cfrac{2^{\theta -1}}{3-2^\theta }$,  we have
$$x_0^{1,2r+1} \geqslant
\cfrac{2}{3}\cfrac{4^{[r\theta ]+1}-C3^r}{3^r-2^s}-\cfrac{1}{3}\geqslant
\cfrac{2}{3}\cfrac{4^{r\theta }-C3^r}{3^r-2^s}-\cfrac{1}{3}\geqslant
\cfrac{2}{3}\cfrac{4^{r\theta
}-C3^r}{3^r}-\cfrac{1}{3}=\cfrac{2}{3}(\cfrac{4^\theta
}{3})^r-\cfrac{2}{3}C-\cfrac{1}{3}. $$Thus $\lim _{r\to \infty } x_0^{1,2r+1}
=\infty $. Hence $ \Omega -\lim a_n =\infty $.
\end{proof}

\section{Concluding Remarks and open problems}
The results on non-periodic E-sequences in Section 4 are based on the theory
of periodic E-sequences in Section 3 and the Matthews and Watts's formula. Currently, we have no other way to
tackle with non-periodic E-sequences.  We can obtain various generalizations
and analogues of Theorem 4.2,  4.6,  4.10, 4.11   and 4.13. But we need good problems to make some progress.

One seemingly simple problem which we are not able to prove is whether
 $(a_n )_{n\geqslant 1} $  is divergent, where $a_n =2$ if
$n\in \{2^2,3^2,4^2,\ldots \}$ and $a_n =1$ otherwise, i.e., $(a_n )_{n\geqslant 1} $  is $111211112\ldots $.

Another interesting problem is whether $(a_n)_{n\geqslant 1}$  with infinitely many $n$ satisfying $b_n>n\log_23$  is $\Omega-$divergent.   By virtue of  Theorem 4.2, we only need to consider the  case of $\mathop {\overline {\lim } }\limits_{n\to \infty } \cfrac{b_n }{n}=\log_23$.  Theorem 4.6  answers  the problem  if $\cfrac{B_n }{3^n(1.5n^{\frac{1}{9}}-1)}\to
\infty $, as $n\to \infty $.  Currently, we don't know how to tackle with the other cases of the problem.

Conjecture 1.2(ii) is also important  in some sense.

\section*{Availability of data and materials}
\noindent Not applicable.
\section*{Competing interests}
\noindent The author declares to have no competing interests.
\section*{Funding}
\noindent This work is supported by the National Foundation of Natural Sciences of China (Grant No: 61379018
{\&}61662044{\&}11571013).
\section*{Authors¡¯ contributions}
\noindent The author completed the work alone, and read and approved the final manuscript.
\section*{Acknowledgements}
\noindent I am greatly indebted to Prof. JunDe Miao for his constant encouragement
while I was working on the problem.

\section*{References}
\bibliographystyle{elsarticle-harv}

\end{document}